\DeclareFontFamily{OT1}{rsfs}{}
\DeclareFontShape{OT1}{rsfs}{m}{n}{ <-7> rsfs5 <7-10> rsfs7 <10->
rsfs10}{} \DeclareMathAlphabet{\mathscr}{OT1}{rsfs}{m}{n}
\newcommand{\bel}[1]{\begin{equation}\label{#1}}
\newcommand{\beal}[1]{\begin{eqnarray}\label{#1}}
\newcommand{\beadl}[1]{\begin{deqarr}\label{#1}}
\newcommand{\eeadl}[1]{\arrlabel{#1}\end{deqarr}}
\newcommand{\eeal}[1]{\label{#1}\end{eqnarray}}
\newcommand{\eead}[1]{\end{deqarr}}
\newcommand{\eea}{\end{eqnarray}}
\newcommand{\eeaa}{\end{eqnarray*}}
\newcommand{\beaa}{\begin{eqnarray*}}
\newcommand{\be}{\begin{equation}}
\newcommand{\ee}{\end{equation}}
\DeclareFontFamily{OT1}{rsfs}{}
\DeclareFontShape{OT1}{rsfs}{m}{n}{ <-7> rsfs5 <7-10> rsfs7 <10->
rsfs10}{} \DeclareMathAlphabet{\mycal}{OT1}{rsfs}{m}{n}
\newcounter{mnotecount}[section]
\newcommand{\rmnote}[1]{}
\def\mysavedown#1{\edef\mysubs{\mysubs#1}}
\def\mysaveup#1{\edef\mysups{\mysups#1}}
\def\mydown#1{{\mytensor}_{\vphantom{\mysubs}#1}}
\def\myup#1{{\mytensor}^{\vphantom{\mysups}#1}}
\def\tensor#1#2{
  #1
  \def\mytensor{\vphantom{#1}}
  \def\mysubs{\relax}
  \def\mysups{\relax}
  \let\down=\mysavedown
  \let\up=\mysaveup
  #2
  \let\down=\mydown
  \let\up=\myup
  #2
  }
\renewcommand{\sharp}{\#}
\newcommand{\R}{\mathbb R}
\renewcommand{\to}{\rightarrow}
\renewcommand{\epsilon}{\varepsilon}
\renewcommand{\hat}{\widehat}
\def\crn#1#2{{\vcenter{\vbox{
        \hbox{\kern#2pt \vrule width.#2pt height#1pt
           }
          \hrule height.#2pt}}}}
\renewcommand{\hbar}{{\overline h}}
\newcommand{\pre}[2]{{{\vphantom{#2}}^{#1}}\kern-.2ex{#2}}
\DeclareMathOperator{\dist}{dist}
\renewcommand{\S}{\mathbb S}
\theoremstyle{plain}
\newtheorem{theorem}{\sc Theorem}[section]
\newtheorem{lemma}[theorem] {\sc Lemma}
\newtheorem{proposition}[theorem]{\sc Proposition}
\theoremstyle{definition}
\newtheorem{remark}[theorem]{\sc  Remark\rm}
\numberwithin{equation}{section}
\date{\today}
\begin{document}

\author[W. Ao]{Weiwei Ao}

\address{Weiwei Ao
\hfill\break\indent
Wuhan University
\hfill\break\indent
Department of Mathematics and Statistics, Wuhan, 430072, PR China}
\email{wwao@whu.edu.cn}

\author[M.d.M. Gonz\'alez]{Mar\'ia del Mar Gonz\'alez}

\address{Mar\'ia del Mar Gonz\'alez
\hfill\break\indent
Universidad Aut\'onoma de Madrid
\hfill\break\indent
Departamento de Matem\'aticas, 28049 Madrid, Spain}
\email{mariamar.gonzalezn@uam.es}

\author[Y. Sire]{Yannick Sire}

\address{Yannick Sire
\hfill\break\indent
Johns Hopkins University
\hfill\break\indent
Department of Mathematics, Baltimore, MD 21218, USA}
\email{sire@math.jhu.edu}

\title{Boundary connected sum of Escobar manifolds}

\begin{abstract}
Let $(X_1, \bar g_1)$ and $(X_2, \bar g_2)$ be two compact Riemannian manifolds with boundary $(M_1,g_1)$ and $(M_2,g_2)$ respectively. The Escobar problem consists in prescribing a conformal metric on a compact manifold with boundary with zero scalar curvature in the interior and constant mean curvature of the boundary.  The present work is the construction of a connected sum $X=X_1 \sharp X_2$ by excising half ball near points
on the boundary.  The resulting metric on $X$ has zero scalar curvature and a CMC boundary. We fully exploit the nonlocal aspect of the problem and use new tools developed in recent years to handle such kinds of issues. Our problem is of course a very well-known problem in geometric analysis and that is why we consider it but the results in the present paper can be extended to other more analytical problems involving connected sums of constant fractional curvatures.
\end{abstract}

\maketitle

\tableofcontents

\section{Introduction}\label{section:intro}
Let $(X_i,\bar g_i)$, $i=1,2$, be two $(n+1)$-dimensional smooth compact Riemannian manifolds with boundaries $M_i$, $i=1,2$, for some $n\geq 2$, and set $g_i=\bar g_i|_{M_i}$. We assume that  $X_i$ are scalar-flat in the interior and have constant mean curvature $H_0$ on the boundary $M_i$.

We are interested in constructing connected sums on the boundary, producing a new scalar-flat manifold with CMC of the boundary. This is related to the so-called Escobar problem (see \cite{escobar}). Indeed, let $u$ be a (positive) solution on $X^{n+1}$ of the problem
\[
\left\{
\begin{array}{rllllll}
-    \, \Delta_{X}  u +\frac{n-1}{4n} R_{X} u & =& 0 \qquad \mbox{in} \qquad X,\\[3mm]
 \,  \partial_\nu u + \frac{n-1}{2}H_M \, u & = &  \frac{n-1}{2}H_0 \, u^{\frac{n+1}{n-1}} \qquad \mbox{on} \qquad \partial X,
\end{array}
\right.
\]
where $R_X$ is the scalar curvature of $X$, $H_M$ the mean curvature of $M=\partial X$, $\nu$ is the outer normal with respect to the metric $g$ (on $\partial X$) and $H_0$ is a constant depending only on the conformal structure. Therefore, the new metric
$$\bar g'= u^{\frac{4}{n-1}} \bar g$$
has zero curvature and the boundary has constant mean curvature with respect to the metric $\bar g'$.  Escobar in his seminal paper \cite{escobar} solved the previous boundary problem in most of the cases (see also \cite{Marques:existence-results,Brendle-Chen,Mayer-Ndiaye} for later developments on the problem).

The aim of the present work is to provide a different construction based on connected sums. While connected sums in the interior have been obtained in \cite{MPU,joyce}, in this paper, we construct connected sums on the boundary. To this end, we fully use the analogy of the Escobar problem with the Dirichlet-to-Neumann operator approach as pointed out in \cite{GG,CG} (see also the survey \cite{Gonzalez:survey}). Following the work of Graham and Zworski \cite{GZ} and many others (see, for instance, \cite{Guillarmou:meromorphic,Case-Chang} and the references therein), one can see that the Dirichlet-to-Neumann operator associated to the Escobar problem is a conformally covariant pseudo-differential operator of order $1/2$.

We prove the following theorem:
\begin{theorem}\label{main}
Let $(X_i,\bar g_i)$, $i=1,2$ be two smooth compact Riemannian manifolds with boundary $(M_i,g_i)$, $g_i=\bar g_i|_{M_i}$, $i=1,2$. Assume that the manifolds $X_i$ are non degenerate in the sense below, scalar-flat and with CMC boundary. Then there exists a connected sum $X_1\sharp X_2$ obtained by excising an half-ball around a boundary point which is scalar-flat and has CMC boundary.
\end{theorem}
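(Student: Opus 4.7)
The plan is to reduce the geometric gluing problem to a nonlocal semilinear problem on the boundary via the Dirichlet-to-Neumann / fractional conformal Laplacian framework emphasized in the introduction, and then to implement a singular-perturbation gluing construction of Mazzeo--Pacard--Pollack type \cite{MPU} adapted to an operator of order $1/2$. The first step is to rephrase the Escobar system as a scalar nonlocal equation on the boundary: since the Dirichlet-to-Neumann operator $P_{1/2}^{g_i}$ associated to the scalar-flat extension is a conformally covariant pseudodifferential operator of order $1/2$ (see \cite{GZ,Guillarmou:meromorphic,Case-Chang,CG}), the Escobar problem on $X_i$ is equivalent to
\[
P_{1/2}^{g_i} v_i = \tfrac{n-1}{2} H_0 \, v_i^{(n+1)/(n-1)} \quad \text{on } M_i,
\]
with $v_i = u|_{M_i}$. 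The hypothesis becomes that $v_i \equiv 1$ is a nondegenerate positive solution on $(M_i, g_i)$, and the goal is now to construct a positive solution to the analogous equation on the boundary connected sum $M_1 \sharp M_2$.

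Next I would build an approximate solution on the glued manifold. At each boundary point $p_i \in M_i$, conformal normal coordinates give a local model $(\R^{n+1}_+, |dx|^2)$ for which the limiting equation is
\[
(-\Delta_{\R^n})^{1/2} v = c_n \, v^{(n+1)/(n-1)}.
\]
Its positive solutions form the $(n+1)$-parameter family of \emph{Escobar bubbles}; passing to cylindrical coordinates $\R^n \setminus \{0\} \cong \R \times \S^{n-1}$ produces a one-parameter family of half-Delaunay type profiles $v_T$, $T > 0$, decaying exponentially at two ends, which will serve as neck models. Excising a half-ball of radius $\epsilon$ around each $p_i$ and identifying the resulting hemispheres yields $X_\epsilon := X_1 \sharp_\epsilon X_2$ with boundary $M_\epsilon$, on which I would define an approximate solution $\bar v_\epsilon$ by interpolating between $v_i \equiv 1$ on the outside and a rescaled profile $v_{T(\epsilon)}$ on the neck, with $T(\epsilon) \to \infty$ chosen so that the pieces match to first order. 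Pseudolocality of $P_{1/2}$ then makes the error
\[
E_\epsilon := P_{1/2}^{g_\epsilon} \bar v_\epsilon - \tfrac{n-1}{2} H_0 \, \bar v_\epsilon^{(n+1)/(n-1)}
\]
small in a weighted norm adapted to the conic geometry of the neck.

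To produce a genuine solution, write $v = \bar v_\epsilon + \phi$ and solve for $\phi$ by a contraction argument. The analytic core is to invert, uniformly in $\epsilon \to 0$, the linearized operator
\[
L_\epsilon \phi = P_{1/2}^{g_\epsilon} \phi - \tfrac{n+1}{2} H_0 \, \bar v_\epsilon^{2/(n-1)} \phi
\]
between suitable weighted Sobolev spaces on $M_\epsilon$: nondegeneracy of the $(X_i, \bar g_i)$ gives invertibility on each outer piece, while on the neck the finite-dimensional approximate kernel coming from translation and dilation symmetries of $v_T$ is absorbed by varying the gluing parameters. Once $L_\epsilon^{-1}$ is controlled uniformly in $\epsilon$, a standard fixed-point argument in the Banach space of weighted functions solves the equation exactly and produces a positive $v_\epsilon$, whose associated conformal metric on $X_1 \sharp X_2$ is scalar-flat with CMC boundary.

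The hard part, and the reason the construction is genuinely more delicate than its interior counterpart \cite{MPU}, is the nonlocal nature of $P_{1/2}^{g_\epsilon}$: cut-off functions do not commute with $P_{1/2}$, so the two summands cannot be analyzed separately and then pasted. To handle this I plan to pass to the Poisson-type extension of $P_{1/2}^{g_\epsilon}$ into a filling of $X_\epsilon$, which converts the boundary equation into a genuinely local degenerate elliptic boundary value problem, and then to develop uniform Fredholm estimates in a $b$- or edge-type weighted calculus on the $\epsilon$-family of manifolds with degenerating neck. These uniform estimates, rather than the gluing itself, are where the bulk of the work will go.
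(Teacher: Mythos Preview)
Your overall architecture --- reformulate via the conformal half-Laplacian $P_{1/2}$, build an approximate solution, invert the linearization uniformly in $\epsilon$, close by contraction --- matches the paper. But the implementation diverges at two linked points, and one of them is a genuine gap.

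\textbf{The neck model.} You propose inserting a rescaled Escobar bubble / half-Delaunay profile $v_T$ on the neck and then running a Lyapunov--Schmidt reduction over its translation/dilation parameters. The paper does something simpler and, for this problem, correct: it performs the gluing at the level of \emph{metrics}, conformally changing each $\bar g_i$ near $p_i$ to the exact half-cylinder $\bar g_0 = ds^2 + g_{\S^n_+}$ and patching with a cutoff to obtain $\bar g_\epsilon$; the approximate solution is then $f\equiv 1$, which is an exact solution on the model cylinder since $P_{g_0}(1)=c$. Your bubble profile, by contrast, decays exponentially to $0$ at both cylindrical ends, so it cannot be matched to $v_i\equiv 1$ on either side --- there is no choice of $T(\epsilon)$ that makes the pieces agree even to leading order. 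The relevant cylindrical solution here is the constant (equivalently the singular radial solution $|x|^{-(n-1)/2}$ on $\R^n$), not the homoclinic bubble; no Delaunay family enters.

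\textbf{The linear analysis.} Because the neck model is the constant on $(\R\times\S^{n-1}, g_0)$, the limiting linear operator is $\mathcal L_0 = P_{g_0} - \tfrac{n+1}{n-1}c$, whose Fourier symbol is explicit (Proposition~\ref{prop:symbol}). The paper's engine is a Liouville theorem for $\mathcal L_0$ (Theorem~\ref{liouville}) obtained from the indicial-root computation of \cite{ACDFGW}: for any weight $\mu \in (-\tfrac{n-1}{2},0)$ avoiding the values $-\sigma_0^{(m)}$, every solution of $\mathcal L_0 v=0$ with $|v|\le C e^{\mu|s|}$ vanishes. This, together with the nondegeneracy of each $M_i$ and a removable-singularity lemma, gives uniform invertibility of $\mathcal L_\epsilon$ on $\overline\omega_\epsilon^\mu \mathcal C^{1,\alpha}$ directly --- no finite-dimensional cokernel, no parameter adjustment. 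Your plan to absorb an approximate kernel by varying gluing parameters is unnecessary here, and in any case would be organized around the wrong limiting operator if you linearize at the bubble.

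A minor point: your ``Poisson-type extension into a filling of $X_\epsilon$'' is $X_\epsilon$ itself; $P_{g_\epsilon}$ is by construction the Dirichlet-to-Neumann map of the conformal Laplacian on $(X_\epsilon,\bar g_\epsilon)$, so there is no further extension to perform.
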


The previous statement is well-known if one does connected sums in the interior as in the works of Joyce \cite{joyce} and Mazzeo, Pollack and Uhlenbeck \cite{MPU}. Our contribution is the development of new tools to handle connected sums on the boundary. In this case the problem becomes non-local since one has to deal with the Dirichlet-to-Neumann operator. While Escobar's problem is an old one, and gluing methods for constant positive scalar curvature are well known by now, when dealing with boundary gluing problems  non-degeneracy is  the main open question.

Due to the recent progress on non-local operators in conformal geometry, in particular \cite{ACDFGW}, several major issues such as non-degeneracy can be by now resolved. Our point of view is to consider the boundary operator directly as a lower order perturbation in H\"ormander classes of the half-Laplacian.

Other gluing problems for non-local equations have been considered in \cite{Existence-weak-solutions,Gluing-isolated,ACDFGW} with the objective of constructing singular metrics with prescribed non-local curvature. Another problem where non-degeneracy was a crucial point was in \cite{Supercritical}.

\section{Formulation of the problem}

As in the standard case (connected sums for Yamabe metrics), the idea is to use a perturbative argument, based on the non-degeneracy of each piece of the connected sum. In our context, this has to be done at a pseudo-differential level.

Let $(X,\bar g)$ be an $(n+1)-$dimensional smooth compact Riemannian manifold with boundary $(M=\partial X, g=\bar g|_{M})$ and consider the following boundary problem
\begin{equation}\label{Dirichletfv}
\left\{
\begin{array}{rllllll}
L_{\bar g}u\,\,\,:=\,\,-   \Delta_{\bar g}  u + \frac{n-1}{4n}R_{\bar g} u & =& 0 \qquad \mbox{in} \qquad X,\\[3mm]
 u & = &  f \qquad \mbox{on} \qquad M,
\end{array}
\right.
\end{equation}
where $R_{\bar g}$ is the scalar curvature of $(X,\bar g)$.
It has been proved in \cite{GG} and \cite{CG} (see the case $2$, $\gamma=1/2$, of Theorem 5.1) that a solution $u$ of \eqref{Dirichletfv}
exists and is unique.   This allows to define the Dirichlet-to-Neumann type operator by
\begin{equation*}\label{half-Laplacian}f\mapsto P_g f=\partial_\nu u|_{\partial X} +  \tfrac{n-1}{2}H_g \, f\end{equation*}
where $H_g$ is the mean curvature of the boundary $M$ with respect to the metric $g$.
 We also denote
$$Q_g:=P_g(1).$$
\begin{remark}
Notice that $Q_g$ is different from $\frac{n-1}{2}H_g$ when  $R_{\bar g}$ is not zero.
\end{remark}

The operator pair $(L_{\bar g},P_g)$ is conformally convariant, indeed, under the change of metric $\bar g_u=u^{\frac{4}{n-1}}\bar g$, and consequently, $g_f=f^{\frac{4}{n-1}}g$ for $f=u|_{M}$, we have that
\begin{equation}\label{transformation-law}
\left\{
\begin{array}{rllllll}
L_{\bar g_u} (\cdot)&=&u^{-\frac{n+1}{n-1}} L_{\bar g}(u\, \cdot)
 \qquad \mbox{in} \qquad X,\\[3mm]
P_{g_f}(\cdot)&=&f^{-\frac{n+1}{n-1}}P_{g}(f \,\cdot) \qquad \mbox{on} \qquad M.
\end{array}
\right.
\end{equation}
By evaluating at the function identically one, one obtains the equation pair:
\begin{equation*}
\left\{
\begin{array}{rllllll}
L_{\bar g} u&=& \frac{n-1}{4n}R_{\bar g}u^{\frac{n+1}{n-1}}  \qquad \mbox{in} \qquad X,\\[3mm]
P_{g}f&=& Q_{g_f}f^{\frac{n+1}{n-1}} \qquad \mbox{on} \qquad M,
\end{array}
\right.
\end{equation*}
where $u|_M=f.$

As a consequence, the problem under consideration is written as
\begin{equation}\label{problem}
\left\{
\begin{array}{rllllll}
L_{\bar g} u&=& 0  \qquad \mbox{in} \qquad X,\\[3mm]
P_{g}f&=& cf^{\frac{n+1}{n-1}} \qquad \mbox{on} \qquad M,
\end{array}
\right.
\end{equation}
for a constant $c$. In order to simplify our notation we will simply write
\begin{equation}\label{theequation}
\mathcal Q_g(f):=f^{-\frac{n+1}{n-1}}P_{g}f=c
\end{equation}
for problem \eqref{problem} and the extension equation will be implicitly understood.

In this paper we will only consider the case $c$ positive. The Yamabe problem in the non-positive case is much easier to handle and follows directly from the variational methods in \cite{escobar}.


By construction, the operator $P_g$ is a pseudo-differential operator of order $1$ (recall the extension formulation for the half-Laplacian \cite{Caffarelli-Silvestre,CG}). Furthermore,
$$P_g=(-\Delta_g)^{1/2}+\Psi_g,$$
where $\Psi_g$ is a pseudo-differential operator of lower order. Then the analysis for \eqref{problem} is inspired by equation
\begin{equation*}
(-\Delta_g)^{1/2} f= c f^{\frac{n+1}{n-1}}.
\end{equation*}

We could think now of the problem
\begin{equation*}
(-\Delta_g)^{\gamma} f= c f^{\frac{n+2\gamma}{n-2\gamma}},\quad \gamma\in(0,1).
\end{equation*}
The gluing construction for other powers $\gamma\in(0,1)$ in order to have connected sums of constant non-local $\gamma$-curvature  should still hold, but it is more  delicate since when building an approximate metric $\bar g_\epsilon$  in Section \ref{section:approximate}, we move outside of the conformal class of $\bar g$. The technicalities involved should be the same ones as in Section 6.8 of \cite{Gonzalez:survey}.\\

We conclude this section by noting that a key operator is the linearization of $P_g$ around a constant $f\equiv 1$ defined on $M$. Since $P_g$ is linear we see that
\begin{equation}\label{linearized}
\mathcal L_g v:=D\mathcal Q_g (1)v=\left.\frac{d}{dt}\right|_{t=0}\mathcal Q_g(1+tv)=P_gv-\frac{n+1}{n-1}Q_gv .
\end{equation}

\section{Construction of the approximate metric}\label{section:approximate}

To construct the approximate solution, we follow the approach of Mazzeo, Pollack and Uhlenbeck \cite{MPU} and, in particular, the one of Mazzeo and Pacard \cite{Mazzeo-Pacard}. There the gluing procedure to obtain $X_1\sharp X_2$ is along an half-ring around a point at infinity. Here we consider the same type of construction at a point on the boundary.

Let $(X_i,\bar g_i)$, $i=1,2$ be two manifolds with boundary  $(M_i=\partial X_i,g_i=\bar g _i|_{M_i})$, $i=1,2$ as in the hypothesis of Theorem \ref{main}.
Consider $p_1 \in  M_1=\partial X_1$ and let $\left \{ y_\ell\right \}$ be a system of normal coordinates on $M_1$ centered at $p_1$. We set Fermi coordinates  $z=(x,y)\in X_1$  and the annulus  $$A_{\epsilon}=\{\epsilon\delta<|z|<\delta\},$$
where  $|z|^2=x^2+|y|^2$. We will take $\delta=\delta(\epsilon)$ satisfying $\delta\to 0$ but  this will not make a difference in the following as long as  $\epsilon^{1/2}<<\delta$ as $\epsilon\to 0$.

In Fermi coordinates, the metric is written as
$$\bar g_{1}=(dx^2+dy^2)+O(\delta^2).$$
Thanks to the dilation
$$R_\delta:A_1\ni z\longrightarrow \delta z\in A_\delta,$$
one can work on $A_1$ (the dependence of $\delta$ should still be taken into account, though). The metric on $A_1$ becomes
$$
\delta^2(dx^2+dy^2)+O(\delta^4).
$$
Multiplying by $\delta^{-2}$ (the metric stays scalar-flat with CMC on the boundary), one can work with the new metric
$$
\bar g_{1,\delta}=(dx^2+dy^2)+O(\delta^2).
$$

Then one reproduces the same construction on $X_2$ in an annulus $A'_1=\{\epsilon<|z'|<1\}$, parameterized with Fermi coordinates $z'=(x',y')$ near a point $p_2\in M_2$. Thanks to an inversion $z'=I(z)=\frac{z}{|z|^2}\epsilon $, one can identify both annulus  $A_1$ and $A_1'$, and we can define a smooth
 manifold with boundary
\begin{equation*}
X :=X_1\sharp X_2.
\end{equation*}
Denote also $M=\partial X$. Note that, often, we will just refer to the manifold $M$, even though the manifold $X$ is present in the background at all times.

Note that the annulus $A_1 \sim A_1'$ is naturally embedded in $X$. Let us now write the metric on this connected sum.

Set $z=(x,y)=(r\cos \phi, r\sin\phi\,\theta)$ with  $r\in\R^+$, $\phi\in[0,\pi/2]$, $\theta\in \S^{n-1}$ be the coordinates in $A_1$. We would like to glue the background metric $\bar g_{1,\delta}$ to the cylinder
$$\bar g_0:=\frac{dr^2+r^2d\phi^2+r^2\sin^2\phi d\theta^2}{r^2}=ds^2+d\phi^2+\sin^2\phi\, d\theta^2,$$
where we have defined the variable $s=-\log r$. For this,
consider the approximate cylindrical metric
$$\tilde g_{1,\delta}=\left(\chi(r)+(1-\chi(r))\frac{1}{r^2}\right)\bar g_{1,\delta},$$
for $\chi$ is a cutoff satisfying $\chi\equiv 1$ if $r\geq 2$, $\chi\equiv 0$ if $r\leq 1$. In the smaller ball $B_{1}(p_1)$, this metric is isometric to a cylinder plus a small error term.  Repeat for $g_2$.

Now we glue both metrics $\tilde g_{1,\delta}$, $\tilde g_{2,\delta}$  by a cutoff. Let $S_\epsilon:=-\log \epsilon$ and consider, as above,
$$A_1=\{\epsilon<r<1\}=\{0<s<S_\epsilon\}.$$
Let $\tilde \chi(s)$ be a cutoff function on $A_1$, satisfying $\tilde \chi\equiv 1$ for $s\leq  S_\epsilon/2-1$,  $\tilde \chi\equiv 0$ for $s\geq  S_\epsilon/2 + 1$.

Now repeat with $A_1'$, and recall that $s'=-s+S_\epsilon$. The approximate solution metric $\bar g_\epsilon$ is constructed as
$$\bar g_\epsilon=\tilde \chi(s) \tilde g_{1,\delta}+\tilde\chi'(s')\tilde g_{2,\delta}.$$
On $M:=\partial X$ we consider the metric $g_\epsilon:=\bar g_\epsilon|_{M}$.
 Note that $\bar g_\epsilon=\bar g_{i,\delta}$ on $X_i \backslash B_{C\sqrt \epsilon}(p_i)$, $i=1,2$.

It will be convenient to define a new variable $\bar s=s-S_\epsilon/2$, and to write the neck $A_1\sim A_1'$ as $\mathcal T_\epsilon=\{-S_\epsilon/2<s<S_\epsilon/2\}$. We let also $\mathcal M_\epsilon:=\mathcal T_\epsilon|_{\phi=\frac{\pi}{2}}$.
Then, as $\epsilon\to 0$, this neck  converges to
$\mathcal T_0=\R\times \S^n_+$ with the metric
$$
\bar g_0=d\bar s^2+d\phi^2+\sin^2\phi\, d\theta^2=d\bar s^2+g_{\S^n},
$$
where $g_{\S^n}$ is the standard metric on $\S^n_+$. Hence $(\mathcal T_0,\bar g_0)$ is half a solid cylinder, with boundary given by a lower dimensional cylinder $\mathcal M_0=\{\phi=\pi/2\}=\mathbb R\times \mathbb S^{n-1}$ and metric $g_0=d\bar s^2+d\theta^2$.

In Lemma \ref{lemma:error} we will show that we indeed have constructed a good approximate solution.

\section{The model cylinder}

From now on, we will take $\bar s$ as variable, and drop the bar in the notation if there is no risk of confusion.

Here we calculate precisely the Fourier symbol of the operator $P_{g_0}$ in the model cylinder. As in the previous section, let $\mathcal T_0=\R\times \S^n_+$ be the half solid cylinder with the metric
$$
\bar g_0=ds^2+d\phi^2+\sin^2\phi\, d\theta^2,
$$
where $s\in\mathbb R$, $\phi\in[0,\pi/2]$ and $\theta\in \S^{n-1}$. Its boundary (or, more precisely, its conformal infinity) is the model cylinder $\mathcal M_0=\{\phi=\pi/2\}=\mathbb R\times \mathbb S^{n-1}$ with metric
$$g_0=ds^2+d\theta^2.$$

Consider the spherical harmonic decomposition for $\mathbb S^{n-1}$. With some abuse of notation, let $\mu_m=m(m+n-2)$, $m=0,1,2,\dots$ be the eigenvalues of $-\Delta_{\mathbb S^{n-1}}$, repeated according to multiplicity. Then any function on $\mathbb R\times \mathbb S^{n-1}$ may be decomposed as $u(s,\theta)=\sum_{m} u_m(s) E_m(\theta)$, where $\{E_m(\theta)\}_m$ is the corresponding basis of eigenfunctions.

Following the construction of \cite{DelaTorre-Gonzalez}, let us write the hyperbolic AdS metric as
\begin{equation*}
g_0^+=\frac{1}{1+R^2}dR^2+(1+R^2)ds^2+R^2d\theta^2,
\end{equation*}
and make the change of variables
\begin{equation*}
R=\tan\phi.
\end{equation*}
This yields
\begin{equation}\label{g0+}
g_0^+=\frac{1}{\cos^2\phi}\left[d\phi^2+ds^2+\sin \phi d\theta^2\right]=\frac{\bar g_0}{\cos^2 \phi}.
\end{equation}
The function $\varrho=\cos\phi$ is a defining function for $\mathcal M_0$, which is the conformal infinity for this metric.

The conformal fractional Laplacian is calculated from the scattering operator of the Einstein metric $g_0^+$. The operator $P_{g_0}$ is defined as the Dirichlet-to-Neumann operator for the extension problem \eqref{Dirichletfv} in the metric $\bar g_0$. As it is explained in \cite{CG} (see also \cite{GG}), $P_{g_0}$ is precisely the conformal version of the half-Laplacian coming from scattering theory in the metric $g_0^+$. Here note that \eqref{g0+} is not written in normal form, but for the half-Laplacian case this is not an issue since the defining function does not appear explicitly in \eqref{Dirichletfv}.

The calculation of the scattering operator  and the conformal fractional Laplacian in this setting can be found in \cite{DelaTorre-Gonzalez,DDGW} and, in particular:

\begin{proposition}[\cite{DelaTorre-Gonzalez}]\label{prop:symbol}
 Fix $\gamma\in (0,\tfrac{n}{2})$ and let $P^{(m)}_{\gamma}$ be the projection of the operator $P^{g_0}_\gamma$ over each eigenspace $\langle E_m\rangle$. Then
$$\widehat{P_\gamma^{(m)} (f_m)}=\Theta_m(\xi) \,\widehat{f_m}.$$
Here $\,\hat \cdot\,$ denotes the usual Fourier transform
\begin{equation*}
\hat f(\xi)=\frac{1}{\sqrt{2\pi}}\int_{\mathbb R} f(s)e^{-i\xi s}\,ds.
\end{equation*}
The Fourier symbol $\Theta_m$ is given by
\begin{equation*}
\Theta_m(\xi)=2^{2\gamma}\frac{\Big|\Gamma\Big(\frac{1}{2}+\frac{\gamma}{2}
+\frac{1}{2}\left(\frac{n}{2}+m-1\right)+\frac{\xi}{2}i\Big)\Big|^2}
{\Big|\Gamma\Big(\frac{1}{2}-\frac{\gamma}{2}+\frac{1}{2}\left(\frac{n}{2}+m-1\right)
+\frac{\xi}{2}i\Big)\Big|^2}.
\end{equation*}
\end{proposition}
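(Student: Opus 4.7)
The plan is to reduce the computation of the symbol to a one-dimensional scattering problem on each joint eigenspace of $-\partial_s^2$ and $-\Delta_{\mathbb{S}^{n-1}}$, and to solve the resulting hypergeometric ODE explicitly. By the Graham--Zworski construction, $P_\gamma^{g_0}$ is (up to an explicit Gamma factor) determined by the Poisson-type problem for the Einstein metric $g_0^+$: setting $s_0=\tfrac{n}{2}+\gamma$, one solves
\begin{equation*}
-\Delta_{g_0^+} U \;=\; s_0(n-s_0)\,U \qquad \text{on } \mathcal{T}_0,
\end{equation*}
with boundary behaviour $U\sim \varrho^{n-s_0}F+\varrho^{s_0}G$ as $\varrho=\cos\phi\to 0$, and $F|_{\varrho=0}=f$. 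The operator $P_\gamma$ is then read off from $G|_{\varrho=0}$. As emphasized in \cite{CG,GG}, the fact that $\bar g_0$ is not in normal form does not matter at $\gamma=\tfrac{1}{2}$; more importantly, working with general $\gamma\in(0,\tfrac{n}{2})$ here produces exactly the family covered by the proposition.

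Next I would separate variables. Writing $U(s,\phi,\theta)=e^{i\xi s}E_m(\theta)\,V(\phi)$ and using \eqref{g0+} together with the standard expression for the Laplacian of an Einstein conformally compact metric, the eigenvalue equation collapses to a second order linear ODE in $\phi\in[0,\pi/2)$ for $V$, with coefficients depending polynomially on $\xi$, $\mu_m=m(m+n-2)$, and on $\gamma$. I would then change variables $R=\tan\phi$, or equivalently use $t=\cos^2\phi$, to bring the ODE to the standard Gauss hypergeometric form with regular singular points at $t=0$ (corresponding to $\phi=\pi/2$, i.e.\ the conformal infinity) and $t=1$ (corresponding to $\phi=0$, the axis). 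At $t=0$ the characteristic exponents are $(n-s_0)/2$ and $s_0/2$ in the variable $t$, i.e.\ $n-s_0$ and $s_0$ in the defining function $\varrho$, matching the scattering asymptotics; at $t=1$ the exponents are $m/2$ and $(2-m-n)/2$.

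Requiring regularity of $V$ at the axis $\phi=0$ singles out a specific hypergeometric function $\Hyperg(a,b;c;t)$ with
\begin{equation*}
a=\tfrac{1}{2}\bigl(\tfrac{n}{2}+m+\gamma+i\xi\bigr),\qquad b=\tfrac{1}{2}\bigl(\tfrac{n}{2}+m+\gamma-i\xi\bigr),\qquad c=\tfrac{n}{2}+m,
\end{equation*}
or the analogous parameters dictated by the indicial equation. Applying the classical connection formula for ${}_2F_1$ to expand this solution in the Frobenius basis $\{\varrho^{n-s_0},\,\varrho^{s_0}\}$ at $t=0$ gives the ratio $G/F$ at the boundary as an explicit ratio of four Gamma functions. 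The Graham--Zworski normalization $2^{2\gamma}\Gamma(\gamma)/\Gamma(-\gamma)$ then converts this ratio into $\Theta_m(\xi)$. The $|\Gamma|^2$ form in the statement results because the real parts of the Gamma arguments are the same and $\xi$ enters purely in the imaginary part, so $\Gamma(\cdot)\overline{\Gamma(\cdot)}=|\Gamma(\cdot)|^2$.

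The only delicate step is the bookkeeping of Gamma factors in the connection formula, but this is exactly the calculation carried out in \cite{DelaTorre-Gonzalez} (and revisited in \cite{DDGW}); in the present setting the angular factor is $\mathbb{S}^{n-1}$ instead of $\mathbb{S}^{n}$, which simply shifts $m\mapsto m+\tfrac{n}{2}-1$ in the exponent of the Gamma arguments. The main obstacle, if any, is to verify that the half-space Dirichlet-to-Neumann operator defined via \eqref{Dirichletfv} on $(\mathcal{T}_0,\bar g_0)$ agrees with the scattering operator of $g_0^+$ at $\gamma=\tfrac{1}{2}$; this is the standard Caffarelli--Silvestre/scattering equivalence recalled in \cite{CG,GG}, so no new input is needed, and the full statement follows by quoting Proposition 3.1 of \cite{DelaTorre-Gonzalez} after this identification.
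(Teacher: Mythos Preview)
Your proposal is correct and in fact goes beyond what the paper does: the paper gives no proof of this proposition at all, simply citing it as a result from \cite{DelaTorre-Gonzalez} (and \cite{DDGW}). Your sketch---separating variables on $(\mathcal T_0,g_0^+)$, reducing the scattering eigenvalue equation to a hypergeometric ODE in $t=\cos^2\phi$, selecting the regular solution at the axis, and reading off the symbol from the Gauss connection formula---is precisely the computation carried out in that reference, so you are reconstructing the cited proof rather than offering an alternative route. One small quibble: your remark that passing from an $\mathbb S^n$ to an $\mathbb S^{n-1}$ angular factor ``simply shifts $m\mapsto m+\tfrac{n}{2}-1$'' is unnecessary here, since the setting of \cite{DelaTorre-Gonzalez} already has $\mathbb S^{n-1}$ as the cross-section of the boundary cylinder $\mathcal M_0$; no shift is required and the formula transfers verbatim.
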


In this paper we are only interested in the case $\gamma=1/2$, so for the Fourier symbol of $P_{g_0}:=P_{1/2}^{g_0}$ simplifies to
\begin{equation*}
\Theta_m(\xi)=2\frac{\Big|\Gamma\Big(\frac{1}{4}
+\frac{1}{2}\left(\frac{n}{2}+m\right)+\frac{\xi}{2}i\Big)\Big|^2}
{\Big|\Gamma\Big(-\frac{1}{4}+\frac{1}{2}\left(\frac{n}{2}+m\right)
+\frac{\xi}{2}i\Big)\Big|^2}.
\end{equation*}
In addition, one can check that
\begin{equation}\label{normalization-constant}
Q_{g_0}=P_{g_0}(1)=\Theta_0(0)=\frac{2\,\Gamma\Big(\frac{n+1}{4}\Big)^2}
{\Gamma\Big(\frac{n-1}{4}
\Big)^2}=:c.
\end{equation}
 Note that this constant, in the paper \cite{DelaTorre-Gonzalez}, is denoted by $c_{n,1/2}$.

\section{Linear study}

We endow $X=X_1\sharp X_2$ with the approximate metric $\bar g_\epsilon$ constructed in Section  \ref{section:approximate}. Let $\mathcal L_\epsilon$ be the linearized operator \eqref{linearized} around the approximate metric $g_\epsilon=\bar g_\epsilon|_M$. The main step in the proof Theorem \ref{main} is to show that $\mathcal L_\epsilon$ is invertible, provided  that it is invertible on each $X_i$, $i=1,2$. Furthermore, we need to have uniform bounds for the inverse with respect to $\epsilon$.

Before we study the operator $\mathcal L_\epsilon$, let us look at the model cylinder $g_0$, for which the linearized operator has the simple formula
 \begin{equation*}
\mathcal L_0 v=P_{g_0}v-\kappa v,
 \end{equation*}
 where we have defined the constant $\kappa:=\frac{n+1}{n-1}c>0$.
 Taking into account the conformal transformation law for the  conformal fractional Laplacian, if we write our metric as
\begin{equation*}
g_0=\rho^{-2}(d\rho^2+\rho^2d\theta^2)=\rho^{-2}g_{\R^n},
\end{equation*}
for $\rho=e^{-s}$ and make the change
\begin{equation}\label{shift}
\tilde v=\rho^{-\frac{n-1}{2}}v,
\end{equation}
then we can see that $\mathcal L_0$ is a conjugate of a fractional Laplacian operator on $\mathbb R^n$ with critical Hardy potential, this is,
\begin{equation}\label{conjugation}
\mathcal L_0 v=\rho^{\frac{n+1}{n-1}}\left[(-\Delta_{\mathbb R^n})^{1/2}\tilde v-\frac{\kappa}{\rho}\tilde v\right]=:\rho^{\frac{n+1}{n-1}}\tilde{\mathcal L_0}\tilde v.
\end{equation}
One should keep in mind this conjugation in the analysis below, specially the shift in powers from \eqref{shift}.

\subsection{Elliptic regularity in weighted spaces}

Consider the weight $\omega_\epsilon$ defined as a smooth version of the function defined by
\begin{equation*}
\tilde{\omega}_\epsilon=\left \{
\begin{split}
&\cosh(s)/\cosh(S_\epsilon), \,\,\,\mbox{on}\,\,\mathcal T_\epsilon,\\
&1, \,\,\,\mbox{on}\,\,X\backslash \mathcal T_\epsilon.
\end{split} \right .
\end{equation*}
We require that $\omega_\epsilon$ is smooth, and agrees with $\omega_\epsilon$ except on a small neighborhood of the neck, where it remains bounded between $\frac{1}{2}$ and $2$. The previous weight has been introduced by Mazzeo and Pacard in \cite[Section 4]{Mazzeo-Pacard}. Denote
$$\overline\omega_\epsilon={\omega_\epsilon}|_{\phi=\pi/2},$$
its restriction to the boundary.
We note for comparison that when $s$ is fixed, and $\epsilon\rightarrow 0$,
$$\overline\omega_\epsilon\sim 2\epsilon\cosh(s).
$$

Now introduce  the weighted H\"older spaces $\omega_\epsilon^\mu \mathcal C^{k,\alpha}$
endowed with their natural norm
$$
\|v\|_{\mu,k,\alpha}=\|(\overline\omega_\epsilon)^{-\mu}v\|_{\mathcal C^{k,\alpha}}.
$$
We have the following lemma.
\begin{lemma} For each $\epsilon>0$, the map
$$\mathcal L_\epsilon: \overline \omega_\epsilon^\mu \mathcal  C^{2,\alpha}(M)\rightarrow \overline \omega_\epsilon^{\mu} \mathcal  C^{1,\alpha}(M)$$
is bounded.
\end{lemma}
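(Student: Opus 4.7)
The plan is to reduce the weighted mapping statement to an unweighted continuity property of a pseudodifferential operator of order one on the smooth compact manifold $M=\partial X$. Setting $v=\overline\omega_\epsilon^{\mu}w$ and $\kappa_\epsilon:=\frac{n+1}{n-1}Q_{g_\epsilon}$, which is a smooth bounded function on $M$, one computes
$$\overline\omega_\epsilon^{-\mu}\mathcal L_\epsilon v=\overline\omega_\epsilon^{-\mu}P_{g_\epsilon}(\overline\omega_\epsilon^{\mu}w)-\kappa_\epsilon w,$$
so the boundedness claim reduces to showing that the conjugated operator $A_\epsilon:=\overline\omega_\epsilon^{-\mu}P_{g_\epsilon}\overline\omega_\epsilon^{\mu}$ is continuous from $\mathcal C^{2,\alpha}(M)$ to $\mathcal C^{1,\alpha}(M)$, since the term $\kappa_\epsilon w$ lies in $\mathcal C^{2,\alpha}(M)\hookrightarrow\mathcal C^{1,\alpha}(M)$ with the desired control.

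For the operator $A_\epsilon$, I would invoke the observation already recorded in the paper that $P_g=(-\Delta_g)^{1/2}+\Psi_g$ is a classical pseudodifferential operator of order one on the smooth boundary $M$, together with the fact that, for each fixed $\epsilon>0$, the weight $\overline\omega_\epsilon$ is a smooth strictly positive function on $M$: the factor $\cosh(s)/\cosh(S_\epsilon)$ is bounded away from $0$ and $\infty$ on the compact piece $\mathcal M_\epsilon$, and the smoothing of the construction ensures $\overline\omega_\epsilon\in\mathcal C^\infty(M)$. Consequently, conjugation by $\overline\omega_\epsilon^{\mu}$ preserves the class of classical order-one pseudodifferential operators, producing an operator with the same principal symbol as $P_{g_\epsilon}$ plus lower-order terms arising from the commutator with $\overline\omega_\epsilon^{\mu}$. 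The standard H\"older mapping property for such operators on a compact manifold, namely continuity $\mathcal C^{k+1,\alpha}\to\mathcal C^{k,\alpha}$, applied with $k=1$ then yields the claim for $A_\epsilon$ and completes the proof.

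I do not expect a serious obstacle here, precisely because the lemma is not required to be uniform in $\epsilon$. The genuine difficulty will come in the following lemmas, where one has to obtain uniform bounds and invertibility of $\mathcal L_\epsilon$ as $\epsilon\to 0$; that step will presumably demand a matched-asymptotics argument on the long neck $\mathcal T_\epsilon$, using the explicit Fourier symbol $\Theta_m(\xi)$ from Proposition \ref{prop:symbol} to build a model right inverse on $\mathcal T_0$, combined with the non-degeneracy hypothesis on each $X_i$ and a careful treatment of indicial roots that dictate the admissible range of the weight exponent $\mu$. For the present qualitative continuity statement, however, only the pseudodifferential calculus on a fixed compact manifold together with the smoothness and positivity of $\overline\omega_\epsilon$ are required.
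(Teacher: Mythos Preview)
Your argument is correct, and in fact more elementary than the paper's. The paper splits $M$ into the region far from the gluing points, where the weight is essentially $1$ and ordinary pseudodifferential calculus applies, and the neck region, where it invokes the weighted mapping theory for edge operators from \cite{Mazzeo:edge,Mazzeo-Vertman}. You instead observe that for each fixed $\epsilon>0$ the weight $\overline\omega_\epsilon$ is a smooth, strictly positive function on the compact manifold $M$, so that conjugation $A_\epsilon=\overline\omega_\epsilon^{-\mu}P_{g_\epsilon}\overline\omega_\epsilon^{\mu}$ stays in the class of classical order-one pseudodifferential operators, and the standard H\"older continuity $\mathcal C^{2,\alpha}\to\mathcal C^{1,\alpha}$ on a compact manifold finishes the job. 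This is legitimate precisely because the lemma makes no claim of uniformity in $\epsilon$; your closing remark correctly identifies that the edge-operator machinery the paper alludes to is really needed only at the next stage, when uniform bounds as $\epsilon\to 0$ are required and the weight genuinely degenerates.
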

\begin{proof}
Far from the point $p_i$ in $M_i$, $\mathcal L_\epsilon$ is a lower order perturbation of $(-\Delta_{g_i})^{1/2}$, $i=1,2$. There the weights do not play any role and thus $\mathcal L_\epsilon$ is bounded in $\mathcal  C^{2,\alpha}$.

On the other hand, near the point $p_1$, for instance, after conjugation, $\mathcal L_\epsilon$ is also a lower order perturbation of $(-\Delta_g)^{1/2}$. Thus the mapping properties of $\mathcal L_\epsilon$ can be deduced from the ones of $(-\Delta_g)^{1/2}$, and the Lemma follows by standard pseudodifferential calculus in weighted spaces (see the references \cite{Mazzeo:edge} and \cite{Mazzeo-Vertman} on regularity and boundary regularity for edge operators).

\end{proof}

\subsection{A Liouville theorem for the model metric}

One of the main results in \cite{ACDFGW} implies that the behavior of a singular solution to $\mathcal L_0 v=0$ is given  in terms of the inditial roots of the problem. This yields a Liouville theorem for $\mathcal L_0$ (see Theorem \ref{liouville} below). For this, in the notation of Proposition \ref{prop:symbol} for the spherical harmonic decomposition, set
 \begin{equation}\label{operator-m}
\mathcal L_0^{(m)} v:=P_{1/2}^m v-\kappa v,\quad m=0,1,\ldots
 \end{equation}
to be the $m$-th projection of the operator $\mathcal L_0$.

First, the indicial roots for  $\mathcal L_0^{(m)}$ are given in the following Lemma from \cite{ACDFGW}, and we refer to this paper for a more detailed asymptotic behavior. The crucial point of the later proofs is that, even though there exists an infinite sequence of inditial roots for each mode $m=0,1,\ldots$, the behavior of \eqref{operator-m}  is governed by the first inditial root pair; its real part will be denoted by $\gamma_m^\pm$.

\begin{lemma}\label{indicial} Fix $m=0,1,\ldots$
Both at $t=+\infty$ and $t=-\infty$,  there exist two sequences of indicial roots for $\mathcal L_0^{(m)}$, with no accumulation points:
\begin{equation*}
\{\sigma_j^{(m)}\pm i\tau_j^{(m)}\}_{j=0}^\infty\quad \text{and} \quad\{-\sigma_j^{(m)}\pm i\tau_j^{(m)}\}_{j=0}^\infty.
\end{equation*}
We denote  $\gamma_m^\pm:=\pm\sigma_0^{(m)}$.
Then
\begin{itemize}
\item[\emph{a)}] For the mode $m=0$, $\gamma_0^\pm=\sigma_0^{(0)}=0$, so we have a pair of inditial roots that are complex conjugates with real part zero.
\item[\emph{b)}]   For the remaining modes $m\geq 1$, $\tau_0^{(m)}=0$, so the first inditial root $\gamma_m^\pm$ is real. Actually, for the mode $m=1$, $\gamma_1^-:=-\sigma_0^{(1)}=-1$.

\item[\emph{c)}] $\{\gamma_m^+\}$ is an increasing sequence in $m$.

\item[\emph{d)}] In addition, for all $j\geq 1, \ m\geq 0$, $\sigma_j^{(m)}>\frac{n-1}{2}$.



\end{itemize}
\end{lemma}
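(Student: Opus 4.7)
The plan is to identify the indicial roots via the explicit Fourier symbol of Proposition \ref{prop:symbol}. A value $\lambda\in\mathbb C$ is an indicial root of $\mathcal L_0^{(m)}$ precisely when the formal solution $e^{\lambda s}E_m(\theta)$ satisfies the equation, equivalently $\Theta_m(-i\lambda)=\kappa$. Using the analytic continuation $|\Gamma(a+\tfrac{\xi}{2}i)|^2=\Gamma(a+\tfrac{\xi}{2}i)\Gamma(a-\tfrac{\xi}{2}i)$ and substituting $\xi=-i\lambda$, the characteristic equation becomes
\[
F_m(\lambda):=\frac{\Gamma\bigl(\tfrac{1}{4}+A_m+\tfrac{\lambda}{2}\bigr)\,\Gamma\bigl(\tfrac{1}{4}+A_m-\tfrac{\lambda}{2}\bigr)}{\Gamma\bigl(-\tfrac{1}{4}+A_m+\tfrac{\lambda}{2}\bigr)\,\Gamma\bigl(-\tfrac{1}{4}+A_m-\tfrac{\lambda}{2}\bigr)}=\frac{\kappa}{2},
\]
with $A_m=\tfrac{1}{2}(\tfrac{n}{2}+m)$. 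Since $F_m$ is even in $\lambda$ and meromorphic with polynomial growth in horizontal strips (by Stirling), the zero set of $F_m-\kappa/2$ is discrete and symmetric under $\lambda\mapsto-\lambda$, producing the two sequences $\{\pm\sigma_j^{(m)}\pm i\tau_j^{(m)}\}$ with no accumulation points.

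For part (b), I would verify $\lambda=-1$ at $m=1$ by direct computation: with $A_1=(n+2)/4$, the functional equation $\Gamma(x+1)=x\Gamma(x)$ yields $\Gamma\bigl(\tfrac{n+5}{4}\bigr)=\tfrac{n+1}{4}\Gamma\bigl(\tfrac{n+1}{4}\bigr)$ and $\Gamma\bigl(\tfrac{n+3}{4}\bigr)=\tfrac{n-1}{4}\Gamma\bigl(\tfrac{n-1}{4}\bigr)$, giving $F_1(-1)=\tfrac{n+1}{n-1}\,\Gamma((n+1)/4)^2/\Gamma((n-1)/4)^2=\kappa/2$. Reality of $\gamma_m^\pm$ for general $m\geq 1$ would follow by showing $F_m(0)=[\Gamma(\tfrac{1}{4}+A_m)/\Gamma(-\tfrac{1}{4}+A_m)]^2>\kappa/2$ (using the monotonicity in $x$ of $\Gamma(x+\tfrac{1}{2})/\Gamma(x)$) while $F_m\to 0$ at the first pole of the denominator at $\lambda=(n+2m-1)/2$; the logarithmic derivative of $F_m$ is strictly negative on this interval by monotonicity of the digamma function $\psi$, producing a unique real root $\gamma_m^+\in(0,(n+2m-1)/2)$.

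For part (a), at $m=0$ one computes $F_0(0)=\Gamma((n+1)/4)^2/\Gamma((n-1)/4)^2=c/2<\kappa/2$, while $(\log F_0)''(0)=\tfrac{1}{2}[\psi'(\tfrac{n+1}{4})-\psi'(\tfrac{n-1}{4})]<0$ by strict monotonicity of $\psi'$ on $(0,\infty)$, so $F_0$ decreases on $[0,(n-1)/2)$ and stays strictly below $\kappa/2$ there, ruling out real roots. On the imaginary axis $\lambda=i\tau$, however, the two numerator factors become complex conjugates, so $F_0(i\tau)=|\Gamma(\tfrac{n+1}{4}+\tfrac{i\tau}{2})|^2/|\Gamma(\tfrac{n-1}{4}+\tfrac{i\tau}{2})|^2$ is positive and real-valued, with Stirling giving $F_0(i\tau)\sim|\tau|/2\to\infty$; the intermediate value theorem then yields a purely imaginary pair $\pm i\tau_0^{(0)}$, so $\gamma_0^\pm=0$ with complex conjugate first roots. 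Part (c) follows from implicit differentiation $\partial_m\gamma_m^+=-\partial_m F_m/\partial_\lambda F_m$, both expressed as $\psi$-combinations whose sign is controlled by digamma monotonicity. For (d), the remaining zeros of $F_m-\kappa/2$ all lie past the first denominator pole at $\lambda=(n+2m-1)/2$, whether real or shifted off the axis into the complex plane, giving $\sigma_j^{(m)}>(n-1)/2$ for $j\geq 1$ and all $m\geq 0$.

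The main obstacle is the careful sign analysis of the $\psi$-combinations appearing in $(\log F_m)'$ and in the implicit-differentiation formula for $\partial_m\gamma_m^+$, which requires repeated use of the log-convexity of $\Gamma$ on the positive real axis together with the reflection formula $\Gamma(x)\Gamma(1-x)=\pi/\sin(\pi x)$ whenever arguments drift into the negative half-line; much of this bookkeeping is in any case carried out in \cite{ACDFGW} and should be invoked rather than redone here.
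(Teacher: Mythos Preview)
Your proposal is correct and aligned with the paper's own proof, which simply invokes Lemma~7.1 of \cite{ACDFGW} (specialized to $\gamma=\tfrac12$, $p=\tfrac{n+2\gamma}{n-2\gamma}$) after noting the $\tfrac{n-1}{2}$ shift coming from the conjugation \eqref{shift}--\eqref{conjugation}. What you have written is essentially an explicit sketch of the Gamma/digamma analysis carried out in that reference; your closing remark that the bookkeeping ``should be invoked rather than redone here'' is exactly the paper's attitude, so the two treatments coincide in substance.
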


\begin{proof}
This follows from  Lemma 7.1 in \cite{ACDFGW} for the special case $\gamma=\frac{1}{2}$ and $p=\frac{n+2\gamma}{n-2\gamma}$. There the indicial roots are written for the operator $\tilde{\mathcal L}_0$ from \eqref{conjugation},
 but they only differ by a translation of $\frac{n-1}{2}$. Moreover, the indicial roots at $t=+\infty$ and at $t=-\infty$ are the same in our case.

Note, however, that in \cite{ACDFGW} the lemma is stated only for $p<\frac{n+2\gamma}{n-2\gamma}$, but one can easily check that it also holds for $p=\frac{n+2\gamma}{n-2\gamma}$.
\end{proof}

Now we adapt Theorem 6.1, Remark 6.2. and Proposition 6.3 in \cite{ACDFGW} on the behavior of the Green's function for $\mathcal L_0^{(m)}$ to our setting (note that it corresponds to the unstable case):

\begin{theorem}\label{thm:Green}
Consider the equation
\begin{equation}\label{equation-m}
\mathcal L_0^{(m)} v=h.
\end{equation}
Assume that the right hand side $h$ in \eqref{equation-m} satisfies
\begin{equation*}
h(s)=\begin{cases}
O(e^{-\delta s}) &\text{as } s\to+\infty,\\
O(e^{\delta_0 s}) &\text{as } s\to-\infty,
\end{cases}
\end{equation*}
for some real constants $\delta,\delta_0$. It holds:
\begin{itemize}
\item[\textit{i.}] If $\delta>0$ and $\delta_0\geq 0$, then
a particular solution of \eqref{equation-m} can be written as
\begin{equation*}\label{solucion1}
v_m(s)=\int_{\mathbb R} h(\tilde s) {\mathcal G}_m(s-\tilde s)\,d\tilde s,
\end{equation*}
where
\begin{equation*}\label{Green1}
\mathcal G_m(s)=d_0 \sin({\tau_0s})\chi_{(-\infty,0)}(s)+\sum_{j=1}^\infty d_j e^{-\sigma_j |s|} \cos(\tau_j |s|)
\end{equation*}
for some constants $d_j$, $j=0,1,\ldots$. Moreover, $\mathcal G_m$ is an even $\mathcal C^{\infty}$ function when $s\neq 0$ and
\begin{equation}\label{decay-v}
v_m(s)=O(e^{-\delta s}) \text{ as }  s\to +\infty, \quad v_m(s)=O(e^{\delta_0 s}) \text{ as }  s\to -\infty.
\end{equation}

\item[\textit{ii.}] Now assume only that $\delta+\delta_0\geq 0$. If $\sigma_J<\delta<\sigma_{J+1}$ (and thus $\delta_0>-\sigma_{J+1})$, then a particular solution is
    \begin{equation*}\label{solucion2}
v_m(s)=\int_{\mathbb R} h(\tilde s) {\tilde{\mathcal G}}_m(s-\tilde s)\,d\tilde s,
\end{equation*}
where
\begin{equation*}
\tilde{\mathcal G}_m(s)=\sum_{j=J+1}^\infty d_j e^{-\sigma_j |s|} \cos(\tau_j |s|).
\end{equation*}
Moreover, $\tilde{\mathcal G}_m$ is an even $\mathcal C^{\infty}$ function when $s\neq 0$ and the same conclusion as in \eqref{decay-v} holds.

\item[\textit{iii.}]
All solutions of the homogeneous problem ${\mathcal L}_0^{(0)} v=0$ are of the form
\begin{equation}\label{mode-0}
\begin{split}
v(s)&=\,C_0^+ \sin({\tau_0^{(0)}s})+C_0^- \cos(\tau_0^{(0)} s)\\
&+\,\sum_{j=1}^\infty C_j^- e^{-\sigma_j^{(0)} s}
\cos(\tau_j^{(0)} s)+\sum_{j=1}^\infty C_j^+ e^{+\sigma_j^{(0)} s}
\cos(\tau_j^{(0)} s)
\end{split}
\end{equation}
for some real constants $C_j^-,C_j^+$, $j=0,1,\ldots$, and all solutions to  ${\mathcal L}_0^{(m)} v=0$ for $m\geq 1$ are of the form
\begin{equation}\label{mode-m}
\begin{split}
v(s)=\,\sum_{j=0}^\infty C_j^- e^{-\sigma_j^{(m)} s}
\cos(\tau_j^{(m)} s)+\sum_{j=0}^\infty C_j^+ e^{+\sigma_j^{(m)} s}
\cos(\tau_j^{(m)} s)
\end{split}
\end{equation}

\item[\textit{iv.}] The only solution to \eqref{equation-m}, in both the cases \emph{i.} and \emph{ii.}, with decay as in \eqref{decay-v} is precisely $v_m$. Moreover we have unique continuation: any solution that decays faster than any exponential at $t\to \infty$ or $t\to -\infty$ must vanish identically.
\end{itemize}

\end{theorem}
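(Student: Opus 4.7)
The engine is Fourier analysis in the translation-invariant variable $s$. By Proposition \ref{prop:symbol}, $\mathcal L_0^{(m)}$ is Fourier multiplication by $\Theta_m(\xi)-\kappa$, which is an even meromorphic function of $\xi\in\mathbb C$ built from ratios of Gamma factors. Its complex zeros correspond exactly to the indicial roots of Lemma \ref{indicial} via $\lambda\leftrightarrow -i\lambda$, and the Gamma structure makes all of them simple.

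For parts \emph{i} and \emph{ii}, the two-sided decay of $h$ makes $\hat h$ holomorphic in the horizontal strip $\{-\delta_0<\Im\xi<\delta\}$, so I would define the candidate
\[
v_m(s)=\frac{1}{\sqrt{2\pi}}\int_\Gamma \frac{\hat h(\xi)}{\Theta_m(\xi)-\kappa}\,e^{i\xi s}\,d\xi
\]
along any contour $\Gamma$ inside this strip. Shifting $\Gamma$ upwards for $s>0$ and downwards for $s<0$ and collecting residues at the zeros of $\Theta_m-\kappa$ that one crosses yields the convolution representation $v_m=h*\mathcal G_m$ and the one-sided exponential bounds \eqref{decay-v}, since the shifted contour integral itself is exponentially small in $s$. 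In case \emph{i}, for $m=0$ the real-axis zeros $\xi=\pm\tau_0^{(0)}$ are swept only on the downward shift and hence contribute only for $s<0$, giving exactly the $d_0\sin(\tau_0 s)\chi_{(-\infty,0)}(s)$ term, while the complex conjugate pairs at $\pm\tau_j^{(m)}\pm i\sigma_j^{(m)}$ with $j\geq 1$ produce the $e^{-\sigma_j|s|}\cos(\tau_j|s|)$ series. In case \emph{ii}, the narrower admissible strip only permits picking up zeros with $\sigma_j>\delta$, leaving the tail $\tilde{\mathcal G}_m$.

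Part \emph{iii} follows from the standard fact that a tempered distribution $\hat v$ satisfying $(\Theta_m-\kappa)\hat v=0$ must be supported on the discrete zero set of the symbol; combined with simplicity of those zeros this produces linear combinations of $e^{\lambda s}$ as $\lambda$ ranges over the indicial roots, which are precisely the expansions \eqref{mode-0}--\eqref{mode-m}. For part \emph{iv}, any two particular solutions satisfying \eqref{decay-v} differ by a homogeneous solution sharing the same two-sided decay; inspection of \eqref{mode-0}--\eqref{mode-m} shows this forces all coefficients to vanish. Unique continuation from super-exponential one-sided decay is a Paley--Wiener style argument: such decay makes the one-sided Laplace transform entire in a half-plane, and since its Fourier support must also lie in the discrete indicial set it vanishes identically.

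The principal technical point, and the reason the theorem is only \emph{adapted} from \cite{ACDFGW}, is the transition from their operator $\tilde{\mathcal L}_0=(-\Delta_{\mathbb R^n})^{1/2}-\kappa/\rho$ in the subcritical Hardy regime $p<\tfrac{n+2\gamma}{n-2\gamma}$ to our critical-exponent setting. The conjugation \eqref{shift}--\eqref{conjugation} is exact and merely shifts indicial roots by $\tfrac{n-1}{2}$, so the resolvent structure is preserved intact. What one must verify in the critical case is that the Gamma-ratio defining $\Theta_m$ together with the constant $\kappa$ pinned by \eqref{normalization-constant} still yields simple isolated zeros and a convergent residue series for $\mathcal G_m$; this rests on the Stirling asymptotic $|\Theta_m(\xi)|\sim|\Im\xi|$ as $|\Im\xi|\to\infty$, which forces the residues to decay fast enough in $j$ for summability.
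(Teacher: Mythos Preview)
The paper does not supply its own proof of this theorem: the statement is introduced as an adaptation of Theorem~6.1, Remark~6.2 and Proposition~6.3 of \cite{ACDFGW}, and no further argument is given. Your Fourier-analytic sketch---inverting the multiplier $\Theta_m(\xi)-\kappa$ by contour shifting and reading off the Green's function as a residue series at the complex zeros of the symbol---is exactly the mechanism behind those cited results, and your discussion of the critical-exponent adaptation (the shift by $\tfrac{n-1}{2}$ under \eqref{shift}--\eqref{conjugation}, and the Stirling control of the symbol at infinity) correctly isolates what is new here relative to the subcritical case treated in \cite{ACDFGW}.

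One point in your outline needs tightening. In part \textit{iii} you invoke the ``standard fact that a tempered distribution $\hat v$ satisfying $(\Theta_m-\kappa)\hat v=0$ must be supported on the discrete zero set of the symbol.'' But the expansions \eqref{mode-0}--\eqref{mode-m} explicitly contain terms $e^{\pm\sigma_j^{(m)}s}$ with $\sigma_j^{(m)}>\tfrac{n-1}{2}$ arbitrarily large, so a general homogeneous solution is not tempered and has no Fourier transform in $\mathcal S'$. The fix is routine but should be stated: one works instead with the Fourier--Laplace transform (equivalently, conjugates by $e^{-\alpha s}$ and takes the ordinary Fourier transform in a strip where the weighted solution is tempered), and then the support-on-zeros argument goes through strip by strip. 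This is also what makes the unique-continuation claim in part \textit{iv} rigorous.
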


Then one obtains, as a consequence of parts \emph{iii.}-\emph{iv.} above, the following Liouville theorem by choosing the weight $\mu$ accordingly:

\begin{theorem}\label{liouville} Assume that $\mu\in (-\frac{n-1}{2},0)$ and $\mu \neq -\sigma_0^{(m)}$ for $m\geq 0$. Then any solution to  $\mathcal L_0v=0$ satisfying $|v(s,\theta)|\leq ce^{\mu s}$ or $|v(s,\theta)|\leq ce^{-\mu s}$
must vanish everywhere.
\end{theorem}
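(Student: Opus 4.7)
The plan is to decompose $v$ into spherical harmonics on $\mathbb S^{n-1}$, which reduces the Liouville problem to countably many one-dimensional equations, each treatable via the explicit description of $\ker \mathcal L_0^{(m)}$ in part (iii) of Theorem \ref{thm:Green}. Writing $v(s,\theta)=\sum_{m\geq 0} v_m(s) E_m(\theta)$, every coefficient satisfies $\mathcal L_0^{(m)} v_m = 0$, and integrating against $E_m$ on $\mathbb S^{n-1}$ preserves the bound as $|v_m(s)|\le c_m e^{\mu s}$; so it suffices to show $v_m\equiv 0$ for every $m$.

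I would then substitute the representations \eqref{mode-0}--\eqref{mode-m} and exploit the asymptotic independence of the exponentials $e^{\pm\sigma_j^{(m)} s}$: since the indicial rates $\{\pm\sigma_j^{(m)}\}$ are discrete, distinct and have no finite accumulation (Lemma \ref{indicial}), the upper bound on $v_m$ can be read off rate by rate at each end. As $s\to +\infty$ the right-hand side decays, forcing every growing coefficient $C_j^+$ to vanish and, for the $m=0$ mode, killing the two purely oscillatory terms $\sin(\tau_0^{(0)}s), \cos(\tau_0^{(0)}s)$; only decaying pieces with $\sigma_j^{(m)}\ge |\mu|$ survive. As $s\to -\infty$ the same bound grows at rate $|\mu|$, forcing conversely $\sigma_j^{(m)}\le |\mu|$ on the surviving coefficients. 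Hence a $C_j^-$ can be nonzero only when $\sigma_j^{(m)}=|\mu|$.

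The hypotheses now close the argument: the constraint $|\mu|<\frac{n-1}{2}$ combined with part (d) of Lemma \ref{indicial}, which states that $\sigma_j^{(m)}>\frac{n-1}{2}$ for $j\ge 1$, rules out equality for any $j\ge 1$, and the assumption $\mu\ne -\sigma_0^{(m)}$ rules out $j=0$ in every mode (for $m=0$ one simply notes $\sigma_0^{(0)}=0\ne |\mu|$ since $\mu<0$). Therefore every coefficient vanishes, $v_m\equiv 0$, and so $v\equiv 0$. The symmetric bound $|v(s,\theta)|\le c e^{-\mu s}$ follows by the reflection $s\mapsto -s$, under which $\mathcal L_0$ is invariant (its Fourier symbols $\Theta_m$ being even in $\xi$), converting this bound into the first type and reducing to the case already handled. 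The main obstacle is the asymptotic comparison step: one must verify that no cancellation in the infinite series \eqref{mode-0}--\eqref{mode-m} hides a nonzero coefficient, and this is exactly where the spectral gap $\sigma_j^{(m)}>\frac{n-1}{2}$ for $j\ge 1$ together with the exclusion $\mu \ne -\sigma_0^{(m)}$ of the borderline first indicial root becomes essential.
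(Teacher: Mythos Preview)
Your proposal is correct and follows essentially the same approach as the paper: decompose into spherical harmonics, invoke the explicit representation of $\ker\mathcal L_0^{(m)}$ from Theorem~\ref{thm:Green}\,(iii), and compare exponential rates at $s\to\pm\infty$ against the indicial data in Lemma~\ref{indicial} to kill every coefficient. Your write-up is in fact somewhat more explicit than the paper's---you spell out the spherical-harmonic reduction, isolate the borderline case $\sigma_j^{(m)}=|\mu|$ cleanly, and handle the second bound $|v|\le ce^{-\mu s}$ via the reflection $s\mapsto -s$ (which the paper does not mention)---but the underlying argument is the same.
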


\begin{proof}

From statements \emph{iii.} of Theorem \ref{thm:Green}, all the solutions to the homogeneous problem $\mathcal L^{(m)}_0v=0$ are given by \eqref{mode-0} and \eqref{mode-m}. If the weight $\mu\in (-\frac{n-1}{2},0)$, then by \emph{d)} of Lemma \ref{indicial}, $e^{\sigma_j^{(m)}s}>e^{\mu s}$ for $s>0$ and $e^{-\sigma_j^{(m)}s}>e^{\mu s}$ for $s<0$ when $j\geq 1$. So all $C_j^{\pm}=0$ for $j\geq 1, \ m\geq 0$. Moreover, if $\mu\neq -\sigma_0^{(m)}$ for all $m$, using \emph{c.} of Lemma \ref{indicial}, there exists $J\geq 0$ such that $-\sigma_{0}^{J+1}<\mu<\sigma_0^{J}$. Similar to the previous argument again, $e^{\mu s}<e^{-\sigma_0^{i}s}$ when $s>0$ for $i\leq J$ (here we agree that $-\sigma_0^{(i)}=\sigma_0^{(-i)}$ if $i<0$) and $e^{\mu s}<e^{-\sigma_0^{i}s}$ when $s<0$ for $i\geq J+1$, so $C_0^{\pm}=0$ for all $m$. Combining all the above results, one can elliminate all the nontrivial solutions to $\mathcal L_0v=0$. Then in this weighted space, only the trivial solution exists.
\end{proof}


\subsection{Removability of singularities}

We present here a standard result concerning removability of isolated singularities for an elliptic problem if the solution grows slower than its Green's function.

\begin{proposition}\label{noyaupointe}
Let $(X,\bar g)$ be a compact manifold with boundary $(M,g:=\bar g|_M)$.
Assume $\mathcal L_g v=0$ on $M\backslash\{p\}$ and
\bel{ud}
|v(\cdot)|<C\dist(p,\cdot)^{\mu},\;\;\;\mu\in(1-n,0)
\ee
then $u$ extends on all of $M$ so that $\mathcal L_g v=0$ on $M$.
\end{proposition}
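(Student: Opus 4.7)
The plan is a standard distributional removability argument, combined with the known kernel asymptotics for the elliptic pseudodifferential operator $\mathcal L_g$.

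First, since $\dim M=n$ and $\mu\in(1-n,0)\subset(-n,0)$, the pointwise bound \eqref{ud} gives $\dist(p,\cdot)^\mu\in L^q_{\loc}(M)$ for every $q<-n/\mu$, so in particular $v\in L^1_{\loc}(M)$. Thus $v$ extends naturally to a distribution $T_v\in\mathcal D'(M)$. Since $\mathcal L_g v=0$ pointwise on $M\setminus\{p\}$, the distribution $\mathcal L_g T_v$ is supported in $\{p\}$, and the structure theorem for distributions supported at a point yields
\begin{equation*}
\mathcal L_g T_v=\sum_{|\alpha|\leq N} c_\alpha\,\partial^\alpha\delta_p
\end{equation*}
for some $N\in\mathbb N$ and real coefficients $c_\alpha$, the derivatives being taken in a fixed local chart around $p$.

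The next step is to show that every $c_\alpha$ vanishes, by comparison with a fundamental solution. Because $P_g=(-\Delta_g)^{1/2}+\Psi_g$ is a classical elliptic pseudodifferential operator of order $1$ on the closed $n$-manifold $M$ with principal symbol $|\xi|_g$, the zeroth-order perturbation $\mathcal L_g=P_g-\tfrac{n+1}{n-1}Q_g$ is again elliptic of order $1$, and hence admits a parametrix $E$ with $E\,\mathcal L_g=I+K$, where $K$ is smoothing and the Schwartz kernel $E(q,q')$ is smooth off the diagonal with leading singularity $\dist_g(q,q')^{-(n-1)}$ on it. Applying $E$ to the identity above yields
\begin{equation*}
v(q)=\sum_{|\alpha|\leq N} c_\alpha\,(-1)^{|\alpha|}\,\partial^{\alpha}_{q'}E(q,q')\big|_{q'=p}+w(q)
\end{equation*}
in a neighbourhood of $p$, with $w$ smooth. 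The $\alpha$-th summand is singular at $p$ of order exactly $\dist(p,\cdot)^{-(n-1)-|\alpha|}$, which is strictly more singular than $\dist(p,\cdot)^\mu$ for any $\mu>1-n$; comparing with \eqref{ud} forces $c_\alpha=0$ for every $\alpha$.

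Consequently $\mathcal L_g v=0$ as a distribution on all of $M$, and the ellipticity of $\mathcal L_g$ (via the standard pseudodifferential calculus on the closed manifold $M$) shows that $v$ is smooth across $p$ and satisfies the equation classically. The delicate point, and what I expect to be the main obstacle, is justifying the parametrix construction and pinning down the leading diagonal singularity of its kernel for the nonlocal operator $\mathcal L_g$; this, however, is routine given the classical-order-$1$ ellipticity of $P_g$ recorded after equation \eqref{theequation}, so that the standard H\"ormander calculus on closed manifolds applies directly.
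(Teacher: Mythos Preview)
Your argument is correct and follows essentially the same line as the paper's proof: both exploit that $\mathcal L_g$ is an elliptic first-order operator whose fundamental solution (or parametrix kernel) has diagonal singularity $\dist^{-(n-1)}$, and then compare this with the growth bound \eqref{ud} to rule out any Dirac-type contribution at $p$. The only difference is packaging: the paper invokes a B\^ocher-type theorem for the half-Laplacian from \cite{Li-Wu-Xu} as a black box, whereas you unpack that step via the structure theorem for point-supported distributions and the parametrix; your version is more self-contained but otherwise identical in substance.
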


\begin{proof}
This is a standard result but we have not been able to find an exact reference for it.
In normal coordinates near $p$, $\mathcal L_g$ is a lower order perturbation of $(-\Delta_{g})^{1/2}$ plus a zero-th order term. A classical B\^ocher theorem for the half-Laplacian on a punctured ball (see, for instance, \cite{Li-Wu-Xu}) yields that $v$ should be the sum of a smooth  function plus a constant times the fundamental solution of the half-Laplacian. But our hypothesis \eqref{ud} prevents this second case and thus the constant must be zero. Then $v$ can be extended smoothly all the way across the singularity.
\end{proof}

\begin{remark}
This proposition has to be compared to the last statement in Theorem $1.1$ in \cite{GMS}.
\end{remark}

\subsection{Non-degeneracy of the linearized operator}

We now prove the main result of this section.

\begin{proposition}\label{invL}
In the setting of Theorem \ref{main}, assume that
the maps $\mathcal L_{g_i}: \mathcal C^{1,\alpha}(M_i)\rightarrow \mathcal C^{\alpha}(M_i)$ on $M_i$, $i=1,2$, have trivial kernel.
Then for
\begin{equation}\label{choice-mu}
\mu\in\Big(-\frac{n-1}{2},0\Big)\quad\text{and}\quad \mu \neq -\sigma_0^{(m)} \text{ for }m\geq 0,
\end{equation}
the linearized operator $\mathcal L_\epsilon$ is invertible on $M=\partial X$.
Moreover, its inverse can be bounded independently of $\epsilon$ (small enough).
\end{proposition}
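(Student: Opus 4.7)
The plan is to argue by contradiction along the lines of Mazzeo and Pacard \cite{Mazzeo-Pacard}. Suppose the assertion fails; then I can extract sequences $\epsilon_k\to 0$ and $v_k\in\bar\omega_{\epsilon_k}^\mu\mathcal C^{2,\alpha}(M)$ with $\|v_k\|_{\mu,2,\alpha}=1$ but $\|\mathcal L_{\epsilon_k}v_k\|_{\mu,1,\alpha}\to 0$. Setting $w_k:=\bar\omega_{\epsilon_k}^{-\mu}v_k$ yields a family of unit $\mathcal C^{2,\alpha}$ norm, and I pick $q_k\in M$ with $|w_k(q_k)|\geq\tfrac12$. The extraction of a limit then splits according to where $q_k$ concentrates.

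First I would treat the case where $q_k$ stays at a definite distance from both $p_1$ and $p_2$, say $q_k\to q_\infty\in M_i\setminus\{p_i\}$. In that region $\bar\omega_{\epsilon_k}\to 1$ and $\mathcal L_{\epsilon_k}\to\mathcal L_{g_i}$ locally, so the weighted elliptic regularity from the previous subsection, together with Arzel\`a--Ascoli, produces a limit $v_\infty\not\equiv 0$ on $M_i\setminus\{p_i\}$ satisfying $\mathcal L_{g_i}v_\infty=0$ and $|v_\infty(q)|\leq C\dist(p_i,q)^\mu$. The range $\mu\in(-\tfrac{n-1}{2},0)\subset(1-n,0)$ is exactly what Proposition~\ref{noyaupointe} requires to extend $v_\infty$ smoothly across $p_i$. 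By the non-degeneracy hypothesis on $\mathcal L_{g_i}$, $v_\infty\equiv 0$ on $M_i$, a contradiction.

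Second, suppose $q_k$ lies inside the neck $\mathcal T_{\epsilon_k}$, with cylindrical coordinate $s_k\in[-S_{\epsilon_k}/2,S_{\epsilon_k}/2]$. Here I would translate $s\mapsto s-s_k$ and use that $\bar\omega_{\epsilon_k}(s_k+\cdot)/\bar\omega_{\epsilon_k}(s_k)$ stabilises as $k\to\infty$ (to $e^{\mp s}$ if $\pm s_k\to+\infty$, and to $\cosh(s+s_\infty)/\cosh(s_\infty)$ if $s_k$ remains bounded). Rescaling by this local weight, the translated $w_k$ subconverges in $\mathcal C^{2,\alpha}_{\mathrm{loc}}$ to a nontrivial solution $v_\infty$ of $\mathcal L_0 v_\infty=0$ on the model cylinder $\mathcal M_0=\R\times\S^{n-1}$ with the pointwise bound $|v_\infty(s,\theta)|\leq Ce^{\mu|s|}$. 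Since $\mu\in(-\tfrac{n-1}{2},0)$ and $\mu\neq -\sigma_0^{(m)}$ for any $m\geq 0$, Theorem~\ref{liouville} forces $v_\infty\equiv 0$, again a contradiction. The intermediate regime $q_k\to p_i$ with $|s_k|$ bounded is absorbed into this second case after the same rescaling, since the limiting geometry is still the solid cylinder.

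The step I expect to be the main obstacle is the uniform-in-$\epsilon$ weighted Schauder estimate that lets me simultaneously extract the limit and pass $\mathcal L_{\epsilon_k}v_k$ through. Because $P_{g_\epsilon}$ is nonlocal, the rescaling on the neck has to be reconciled with the weight $\bar\omega_{\epsilon_k}$ in the transition region $|s|\sim S_{\epsilon_k}/2$ where $\bar\omega_{\epsilon_k}$ interpolates between its cylindrical shape and $1$. This is precisely where the edge pseudodifferential calculus of \cite{Mazzeo:edge,Mazzeo-Vertman}, already invoked in the previous subsection, supplies the correct $\epsilon$-uniform mapping properties; once these are in hand, the contradiction argument above yields both the invertibility of $\mathcal L_\epsilon$ and the desired uniform bound on its inverse.
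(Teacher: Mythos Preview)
Your proposal is correct and follows essentially the same contradiction/blow-up scheme as the paper: normalize, pick a point where $\bar\omega_\epsilon^{-\mu}v$ is near its maximum, and split according to whether this point stays in $M_i\setminus\{p_i\}$ or enters the neck, invoking Proposition~\ref{noyaupointe} in the first case and Theorem~\ref{liouville} in the second.

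One small imprecision: in the sub-case $|s_k|\to\infty$ the limit weight ratio is $e^{\pm s}$, so the pointwise bound on $v_\infty$ is the \emph{one-sided} $|v_\infty|\le Ce^{\mu s}$ or $|v_\infty|\le Ce^{-\mu s}$, not the two-sided $Ce^{\mu|s|}$ you wrote (the latter only arises when $s_k$ stays bounded). This does not affect the conclusion, since Theorem~\ref{liouville} is stated precisely for such one-sided bounds, but you should separate the two sub-cases in the final write-up as the paper does.
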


\begin{proof}
We only have to show that there exist a constant $C$ such that for all $v\in  \overline \omega_\epsilon^\mu \mathcal C^{2,\alpha}$,
$$
\|v\|_{\mu,1,\alpha}\leq C\|\mathcal L_\epsilon v\|_{\mu,0,\alpha}
$$
for some $C$ independent of $\epsilon$ small enough. We follow the proof of proposition 11 of \cite{Mazzeo-Pacard}.
By contradiction, there would exist  sequences $\epsilon_j\rightarrow 0$ and $\{v_j\}$ for which
$$
\|v_j\|_{\mu,1,\alpha}=1,
$$
while
$$
\|\mathcal  L_{\epsilon_j} v_j\|_{\mu,0,\alpha}\rightarrow 0.
$$
In particular this gives the estimates
$$
|v_j(y)|\leq \overline \omega_{\epsilon_j}^\mu(y),
$$
and
$$
|\mathcal L_{\epsilon_j} v_j(y)|\leq \eta_j \overline \omega_{\epsilon_j}^{\mu}(y)
$$
for all $y\in M$, where $\eta_j\rightarrow 0$.
Let $q_j$ a point where  $|v_j| (\overline \omega_{\epsilon_j})^{-\mu} $ attains
its maximum. Possibly passing to a subsequence, different  cases can occur:
\begin{itemize}
\item[\emph{(i)}] $\{q_j\}$ converges to a point $q$ in $M_1\backslash \{p_1\}$ or $M_2\backslash \{p_2\}$,\\
\item [\emph{(ii)}] $\{q_j\}$ lies in $\mathcal M_\epsilon$,
and $|s_j|\leq C$.\\
\item [\emph{(iii)}] $\{q_j\}$ lies in $\mathcal M_\epsilon$,
and $|s_j|\rightarrow\infty$.\\

\end{itemize}
We start by ruling out the case $(i)$. We may assume that $q_j\rightarrow q\in M_1\backslash \{p_1\}$.
Then $\{v_j\}$ converges in $\mathcal C^{2,\alpha}$ on any compact set of $M_1\backslash \{p_1\}$ to $v$ such that
$\mathcal L_{g_1} v=0$. Now recall that on
$B_C(p_1)\backslash B_\epsilon(p_1)$ we have $\overline \omega_\epsilon$ varying from $\frac2{\epsilon+\epsilon^{-1}}\sim2\epsilon=2d(p_1,\cdot)$ to $1$, so using that  $|v_j(y)|\leq \overline \omega_{\epsilon_j}(y)^\mu$, we obtain as $\epsilon\to 0$,  that $|v|\leq C d(p_1,\cdot)^{\mu}$. Hence by Proposition \ref{noyaupointe} and the hypothesis that $\mathcal L_{g_1}$ has trivial kernel, we have  $v\equiv 0$. However, $v(q)\neq 0$ since $\|v\|_{\mu,1,\alpha}=1$. This yields a contradiction.

We now study cases $(ii)$ and $(iii)$. For the case $(ii)$, as $\epsilon\rightarrow 0$, ${\mathcal M}_\epsilon\rightarrow \mathcal M_0$,
and by the assumption that $\{q_j\}$ stays in a compact set of ${\mathcal M}_0$, we may take $q_j\rightarrow q\in{\mathcal M}_0$.
But when $y\in\mathcal M_0$,
$$
\frac{\overline\omega_{\epsilon_j}(y)}{\overline\omega_{\epsilon_j}(q_j)}=\frac{\cosh s}{\cosh s_j}\rightarrow c'\cosh s,
$$
where $c'>0$ and the convergence is $\mathcal C^\infty$ on any compact set.
We define $$u_j:=\overline\omega_{\epsilon_j}(q_j)^{-\mu}v_j,$$ so $u_j\rightarrow u$ with $u(q)\neq 0$, $\mathcal L_0 u=0$ (where the last operator is with respect to the metric $g_0$)
and $|u(y)|\leq c \cosh^\mu(s)\leq ce^{\mu |s|}$.  By Theorem \ref{liouville},  one has $u\equiv 0$, which contradicts to $u(q)\neq 0$.

Now consider the case $(iii)$ and define
$$
u_j(s,\theta):=\overline\omega_{\epsilon_j}(q_j)^{-\mu}v_j(s+s_j,\theta).
$$
As $|v_j(s+s_j,\theta)|\leq c \overline \omega_{\epsilon_j}(s+s_j,\theta)^{\mu}$ and
$$
\frac{\overline \omega_{\epsilon_j}(s+s_j)}{\overline \omega_{\epsilon_j}(s_j)}=
\frac{\cosh(s+s_j)}{\cosh S_{\epsilon_j}}\frac{\cosh S_{\epsilon_j}}{\cosh s_j}\rightarrow
\left\{\begin{array}{l}e^{s},\mbox{ if } s_j\rightarrow+\infty,\\
e^{-s},\mbox{ if } s_j\rightarrow-\infty,
\end{array}\right.
$$
 then $u_j\rightarrow u$ with $u(0,\theta)\neq 0$ and $u$ is a solution to $\mathcal L_{0} u=0$. Since
 $|u(y)|\leq c (e^s)^\mu$ everywhere or $|u(y)|\leq c (e^{-s})^\mu$ everywhere,  again by Theorem \ref{liouville}, we arrive at a contradiction.
\end{proof}

\begin{remark}
In Proposition \ref{invL} it is enough to assume that $M_1,M_2$ are non-degenerate (in the sense that $\mathcal L_\epsilon$ is well behaved only after adding a \emph{deficiency} space). This is the case if some $M_i$, $i=1,2$, is a Delaunay metric, for instance. However, this generalization only adds technicalities in the analysis and we are not interested in those here.
\end{remark}

\subsection{Surjectivity}\label{subsection:surjectivity}

To find an inverse for the linearized operator $\mathcal L_\epsilon$ we set up the problem in the Hilbert space $L^2(X)$. For this functional analysis part we do not need to introduce weights since the manifold $X=X_1\sharp X_2$ is compact; in the noncompact case, one would need to consider  weighted Lebesgue spaces and to work with deficiency spaces (see \cite{MPU} and later references).

First note that  $\overline \omega_\epsilon^\mu\mathcal C^{\alpha}\hookrightarrow L^2$ if $\mu<0$. We claim that, in weighted Lebesgue spaces, $\mathcal L_\epsilon:L^2(M)\to L^2(M)$ is injective. To see this, let $v\in L^2$ such that $\mathcal L_\epsilon v=0$. By Proposition \ref{invL}, it is enough to show that $v\in \overline\omega_\epsilon^\mu\mathcal C^{1,\alpha}$. But this is a consequence of Theorem \ref{thm:Green} and elliptic regularity again.

Next, since $\mathcal L_\epsilon$ is a self-adjoint operator in $L^2$, surjectivity follows immediately. By elliptic regularity,one can obviously produce a right inverse $G_\epsilon$ in the weighted space $\overline \omega_\epsilon^\mu\mathcal C^\alpha$. Finally, Proposition \ref{invL}  shows that  this inverse has norm uniformly bounded in $\epsilon$.

\section{The construction: proof of Theorem \ref{main}}\label{construction}

Let $g_\epsilon$ be the metric constructed in Section \ref{section:approximate} for the connected sum $M=\partial X$, $X=X_1\sharp X_2$.
The proof of Theorem \ref{main} follows from solving
\begin{equation*}
\mathcal Q_{g_\epsilon}(f):=f^{-\frac{n+1}{n-1}}P_{g_\epsilon}f=c \,\,\text{ constant.}
\end{equation*}
Recalling \eqref{problem}, this equation should always be coupled with
\begin{equation}\label{equation-extension}
L_{\bar g_\epsilon}(f)=0 \quad \text{in }X,
\end{equation}
even if not always explicitly written.
Without loss of generality, let us normalize the constant  $c$  as in \eqref{normalization-constant}.

The procedure is by now standard, using a fixed point argument (see, for instance, Section 9 in  \cite{Mazzeo-Pacard:construction}). Define
$f=1+v$.
Expanding in Taylor series the operator $\mathcal Q$, one gets
$$\mathcal Q_{g_\epsilon} (1+v)=\mathcal Q_{g_\epsilon}(1)-c+\mathcal L_\epsilon v + \mathcal N_\epsilon(v),$$
where $\mathcal N_\epsilon(v)$ is quadratic in $v$. Denote
$$\mathcal A_\epsilon v:= -G_\epsilon (\mathcal Q_{g_\epsilon}(1) -c+\mathcal N_\epsilon(v)),$$
where $G_\epsilon:\overline w_\epsilon^{\mu}\mathcal C^{0,\alpha}\to \overline w_\epsilon^{\mu}\mathcal C^{1,\alpha}$ is the right inverse constructed in the previous section, for $\mu$ satisfying \eqref{choice-mu}.

Then equation \eqref{theequation} is reduced to $v=\mathcal A_\epsilon v$, and we just need to show that $\mathcal A_\epsilon$ is contractive for $\epsilon$ small enough. \\

First we check that $g_\epsilon$ is indeed a good approximate metric:

\begin{lemma}\label{lemma:error} Assume that $\mu<0$. Then
\begin{equation*}
\|Q_{g_\epsilon}-c\|_{\mu,0,\alpha}\leq C_\epsilon,
\end{equation*}
where $C_\epsilon\rightarrow0$ as $\epsilon\to 0$.
\end{lemma}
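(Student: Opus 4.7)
The plan is to partition $M = \partial X$ according to where $\bar g_\epsilon$ is defined and to bound $Q_{g_\epsilon} - c$ pointwise on each piece, then incorporate the weight $\overline\omega_\epsilon^{-\mu}$. I would decompose $M$ into four regions: \textit{(i)} the bulk $M_i \setminus B_{C\sqrt\epsilon}(p_i)$, on which $\bar g_\epsilon$ coincides with $\bar g_i$ (after the fixed conformal normalization making $Q_{g_i} = c$); \textit{(ii)} the conformal-transition annulus in each Fermi chart, supported on a compact set $\{1 \leq r \leq 2\}$ in rescaled coordinates, where the cutoff $\chi(r)$ interpolates between $\bar g_{i,\delta}$ and its cylindrical form $\bar g_{i,\delta}/r^2$; \textit{(iii)} the two cylindrical legs of the neck, where $\bar g_\epsilon = \tilde g_{i,\delta}$ is close to the model cylinder $\bar g_0$; and \textit{(iv)} the central neck transition $\{|\bar s| \leq 1\}$, where $\tilde\chi$ blends the two cylindrical pieces.

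On region \textit{(i)} the contribution vanishes by construction. On \textit{(ii)}--\textit{(iv)} I would write $\bar g_\epsilon = g_{\mathrm{ref}} + h_\epsilon$ relative to an appropriate reference ($\bar g_i$ on \textit{(ii)}, $\bar g_0$ on \textit{(iii)}--\textit{(iv)}) and use the smooth dependence of $P_g$ on the underlying metric in H\"ormander classes, already invoked for the boundedness of $\mathcal L_\epsilon$ in Section 5.1. This yields $|Q_{\bar g_\epsilon} - Q_{g_{\mathrm{ref}}}| \leq C\|h_\epsilon\|_*$ in a suitable operator norm. The Fermi-coordinate Taylor expansion gives $\|h_\epsilon\|_* = O(\delta^2)$ on \textit{(ii)} and on the bulk of \textit{(iii)}; on \textit{(iv)} both pieces $\tilde g_{i,\delta}$ are themselves $O(\delta^2)$-close to $\bar g_0$, so their convex combination is as well.

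To incorporate the weight, observe that $\overline\omega_\epsilon^{-\mu}$ is uniformly bounded on \textit{(i)}--\textit{(ii)}, so those regions contribute at most $O(\delta^2)$. On the middle of the neck, $\overline\omega_\epsilon \leq C\epsilon^{1/2}$ at the endpoints of the $\tilde\chi$-support, so $\overline\omega_\epsilon^{-\mu} \leq C\epsilon^{|\mu|/2}$ there; multiplying by the pointwise bound gives $O(\epsilon^{|\mu|/2})$. Putting the contributions together yields $\|Q_{g_\epsilon} - c\|_{\mu,0,\alpha} \to 0$ as $\epsilon \to 0$. The $\mathcal C^{0,\alpha}$ seminorm of the error is controlled analogously using the continuity of $P_g$ on H\"older spaces established in the elliptic-regularity subsection.

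The main obstacle I expect is making the smooth dependence of $P_g$ on $g$ quantitative in the correct weighted H\"older norms, particularly across the transition regions where the Taylor expansion and the cutoffs interact and the underlying geometry changes character (edge-like near the cylindrical ends). This is handled via the pseudodifferential parametrix of Mazzeo's edge calculus \cite{Mazzeo:edge,Mazzeo-Vertman}, the same tool invoked for the elliptic regularity; once it is set up, the estimate reduces to routine symbolic bookkeeping on each of the four regions above.
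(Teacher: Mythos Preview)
Your region-by-region plan has a genuine gap rooted in the nonlocality of $Q_{g_\epsilon}$. Recall that $Q_{g_\epsilon}=P_{g_\epsilon}(1)=\partial_\nu u|_M+\tfrac{n-1}{2}H_{g_\epsilon}$, where $u$ solves the extension problem $L_{\bar g_\epsilon}u=0$ on \emph{all} of $X$ with $u|_M=1$. The mean-curvature term is local in the metric, but $\partial_\nu u$ depends on the global harmonic extension. Hence on region~\textit{(i)}, even though $\bar g_\epsilon=\bar g_{i,\delta}$ there, the extension $u$ is not identically $1$ (it feels the neck through the interior), so $Q_{g_\epsilon}-c$ does \emph{not} vanish by construction. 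Since the weight $\overline\omega_\epsilon^{-\mu}$ is $O(1)$ on region~\textit{(i)}, this is exactly where a nontrivial smallness estimate is required, and your proposal supplies none. A related problem appears on region~\textit{(ii)}: the metric $\tilde g_{i,\delta}$ differs from $\bar g_{i,\delta}$ by an $O(1)$ conformal factor (the function $\chi+(1-\chi)r^{-2}$ ranges over $[1/4,1]$ on $\{1\le r\le 2\}$), so writing $\bar g_\epsilon=g_{\mathrm{ref}}+h_\epsilon$ with $\|h_\epsilon\|_*=O(\delta^2)$ is not available there either.

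The paper's argument is organized differently and avoids both issues. It never tries to localize $Q_{g_\epsilon}$; instead it works with the extension PDE on $X$ and uses the conformal covariance law to rewrite $L_{\bar g_\epsilon}u=0$ as $-\Delta_{\bar g_1}(\varphi u)=0$ (up to $O(\delta^2)$ normal-coordinate errors), so that the $O(1)$ conformal factor is absorbed exactly. The only genuine error terms are then the commutators of the cutoffs $\chi$ and $\tilde\chi$ with the Laplacian, which are compactly supported; in particular the $\tilde\chi$-error lives on $\Sigma=\{|\bar s|\le 1\}$, where the weight is smallest. A weighted elliptic estimate for the full boundary value problem (the edge calculus you cite) then propagates the smallness of the right-hand side to the Neumann data on all of $M$, region~\textit{(i)} included. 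In short: the paper estimates the error \emph{in the equation} and lets elliptic regularity transport it to $Q_{g_\epsilon}$, whereas your plan attempts to localize $Q_{g_\epsilon}$ directly, which the Dirichlet-to-Neumann construction does not permit.
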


\begin{proof}
Again, we will prove the result for the weighted $L^\infty$ norm $\|\cdot\|_{\mu,0,0}$; to pass to the weighted H\"older norm is standard.

In order to calculate $Q_{g_\epsilon}$, we set up the problem \eqref{problem} with Dirichlet condition identically one, this is
\begin{equation}\label{eq}
L_{\bar g_\epsilon} u=0\quad\text{on }X.
\end{equation}
Our second remark is that we are dealing with edge operators on manifolds with boundary in the sense in the sense of \cite{Mazzeo:edge,Mazzeo-Vertman}, so their mapping properties are well known.

Let us relate the metric $\bar g_\epsilon$ to $\bar g_0$. Assume first that we are in a neighborhood where $\tilde g_{1,\delta}$ lives, this is, $X_1\backslash B_1(p_1)$.  If we write $\tilde g_{1,\delta}=\varphi^{\frac{4}{n-1}}\bar g_1$, recalling the conformal transformation law for the conformal Laplacian \eqref{transformation-law} and the fact that $\bar g_1$ is scalar flat, then equation \eqref{eq} is simply
\begin{equation*}
-\Delta_{\bar g_1}(\varphi u)=0.
\end{equation*}
But $\varphi=1$ outside a neighborhood $B_2(p_1)$, so we recover the original problem for $\bar g_1$.

Inside the neighborhood $B_2(p_1)$ we use normal coordinates $\bar g_2=|dz|^2(1+o(1))$, so up to a small error, we can approximate \eqref{eq} by
\begin{equation}\label{eq2}
-\Delta_{|dz|^2}(\varphi u)=0.
\end{equation}
On the other hand, in $B_1(p_1)$ but before gluing to $\bar g_2$, $\varphi=r^{-\frac{{n-1}}{2}}$, so  \eqref{eq2} can be written as a problem in the cylinder, with metric $\bar g_0$, and it can be included in the theory of edge operators, which are well behaved with respect to weights. In the intermediate neighborhood $B_2(p_1)\backslash B_1(p_1)$ we have error terms depending on $\nabla \varphi$ and $\Delta \varphi$, but these are of lower order.

Let us look closer to the neighborhood $B_1(p_1)$. In this region, equation \eqref{eq} is written as
\begin{equation*}
-\Delta_{\bar g_0} u+\tfrac{n-1}{4n}R_{\bar g_0}u=0.
\end{equation*}
The gluing $\tilde g_{1,\delta}$ to $\tilde g_{2,\delta}$ using the cutoff  $\tilde\chi$  happens in a region $\{C^{-1}\epsilon^{1/2}\leq r\leq C \epsilon^{1/2}\}$. Since $\Delta(\tilde\chi u)-\tilde\chi\Delta u=2\nabla \tilde\chi\cdot\nabla u+ u\Delta \tilde\chi =:\tilde h$, we are creating two new error terms (depending on $\epsilon$). Let us translate everything in the notation  $s$ (for which our weights are adapted). Taking into account the conformal transformation law for $\mathcal L$, our  equation is simply
\begin{equation*}
\mathcal L_{0} u=h
\end{equation*}
for $h:=\tilde h r^{-\frac{n+1}{2}}$. This $f$ has support only on $\Sigma:=\{S_\epsilon/2-1\leq s\leq S_\epsilon/2+1\}$.

Adding the Dirichlet condition $u\equiv 1$ to the extension problem \eqref{eq}, we recover $Q_{g_\epsilon}$ as the Dirichlet-to-Neumann operator
\begin{equation}\label{eq4}
Q_{g_\epsilon}=-\partial_\nu u+\tfrac{n-1}{2}H_{g_\epsilon}u|_{M}.
\end{equation}
When we couple the boundary operator with \eqref{eq}, we are in the framework of studying edge operators on manifolds with boundary from \cite{Mazzeo-Vertman}.

We will compare \eqref{eq}-\eqref{eq4} to the model (zero scalar curvature in the bulk, constant mean curvature of the boundary). Standard regularity estimates yield that the $H^2$ norm of $u$ is bounded, for instance, in terms of the $L^2$ norm in the right hand side. It is standard to add a suitable weight to these estimates. So, as we are comparing two linear problems,  to have good estimates for the Neumann boundary term \eqref{eq4} it is enough to estimate, in a weighted $L^2(dt)$ norm, the error terms produced in the gluing process above, for the weight $\omega_\epsilon^\mu$. Since the weight has a factor $\epsilon^{-\mu}$ for $\mu<0$, we only need to take into account the neck, which is the noncompact part. But here the error term only lives in the neighborhood $\Sigma$, so
\begin{equation*}
\int_\Sigma (\omega_\epsilon)^{-2\mu} h^2\, dt d\phi d\theta \sim \left(S_\epsilon^n \epsilon^{-\mu}\right)^{1/2}\to 0 \quad\mbox{as}\quad \epsilon\to 0.
\end{equation*}
This completes the proof of the Lemma.\\
\end{proof}

We finally check that $\mathcal A_\epsilon$ is a contraction in a suitable space. Define
$$\mathcal B_{\mu} (r_\epsilon)=\left \{  v\in  (\overline\omega_\epsilon)^{\mu} \mathcal C^{1,\alpha}:\,\, \| v\|_{1,\alpha,\mu} \leq r_\epsilon \right \}.$$

\begin{lemma}
For sufficiently small $\epsilon >0$, there exists a radius $r_\epsilon$ such that
$$\mathcal A_\epsilon (\mathcal B_\mu (r_\epsilon)) \subset \mathcal B_\mu (r_\epsilon)$$
and
\begin{equation*}
\|\mathcal A_\epsilon v_1-\mathcal A_\epsilon v_2\|_{1,\alpha,\mu}\leq \frac{1}{2}\|v_1-v_2\|_{1,\alpha,\mu}
\end{equation*}
for all $v_1,v_2\in \mathcal B_\mu(r_\epsilon)$.
\end{lemma}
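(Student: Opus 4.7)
The plan is a Banach contraction argument on the ball $\mathcal{B}_\mu(r_\epsilon)\subset\overline\omega_\epsilon^\mu\mathcal{C}^{1,\alpha}(M)$, exploiting the three ingredients that the preceding sections have already assembled: the uniform invertibility of $\mathcal{L}_\epsilon$, furnishing a right inverse $G_\epsilon$ of norm bounded by some $C_0$ independent of $\epsilon$ (Proposition \ref{invL} together with Subsection \ref{subsection:surjectivity}); the error estimate $\|\mathcal{Q}_{g_\epsilon}(1)-c\|_{\mu,0,\alpha}\leq C_\epsilon\to 0$ (Lemma \ref{lemma:error}); and the explicit form of the Taylor remainder $\mathcal{N}_\epsilon$.

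First I would make $\mathcal{N}_\epsilon$ explicit. Writing $p=\tfrac{n+1}{n-1}$ and using the linearity of $P_{g_\epsilon}$, one computes
$$\mathcal{N}_\epsilon(v)=\bigl[(1+v)^{-p}-1+pv\bigr]\,Q_{g_\epsilon}+\bigl[(1+v)^{-p}-1\bigr]\,P_{g_\epsilon}v,$$
where the first bracket is quadratic in $v$ and the second is linear. For $|v|\leq \tfrac12$, a pointwise Taylor estimate yields
$$|\mathcal{N}_\epsilon(v)|\leq C\bigl(|v|^{2}\,|Q_{g_\epsilon}|+|v|\,|P_{g_\epsilon}v|\bigr),$$
and an analogous difference bound for $\mathcal{N}_\epsilon(v_1)-\mathcal{N}_\epsilon(v_2)$. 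Passing to the weighted H\"older norm, using that $P_{g_\epsilon}\colon\overline\omega_\epsilon^\mu\mathcal{C}^{1,\alpha}\to\overline\omega_\epsilon^\mu\mathcal{C}^{0,\alpha}$ is bounded uniformly in $\epsilon$ and that $Q_{g_\epsilon}$ is uniformly bounded by Lemma \ref{lemma:error}, produces the quadratic and Lipschitz estimates
$$\|\mathcal{N}_\epsilon(v)\|_{\mu,0,\alpha}\leq C_{1}\,\|v\|_{\mu,1,\alpha}^{2},$$
$$\|\mathcal{N}_\epsilon(v_1)-\mathcal{N}_\epsilon(v_2)\|_{\mu,0,\alpha}\leq C_{1}\bigl(\|v_1\|_{\mu,1,\alpha}+\|v_2\|_{\mu,1,\alpha}\bigr)\|v_1-v_2\|_{\mu,1,\alpha},$$
with $C_{1}$ independent of $\epsilon$ on $\mathcal{B}_\mu(r_\epsilon)$ for $r_\epsilon$ small.

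Combining these with the uniform bound on $G_\epsilon$ and Lemma \ref{lemma:error} yields
$$\|\mathcal{A}_\epsilon v\|_{\mu,1,\alpha}\leq C_0\bigl(C_\epsilon+C_{1}\|v\|_{\mu,1,\alpha}^{2}\bigr),$$
$$\|\mathcal{A}_\epsilon v_1-\mathcal{A}_\epsilon v_2\|_{\mu,1,\alpha}\leq C_0 C_{1}\bigl(\|v_1\|_{\mu,1,\alpha}+\|v_2\|_{\mu,1,\alpha}\bigr)\|v_1-v_2\|_{\mu,1,\alpha}.$$
Setting $r_\epsilon:=2C_0C_\epsilon$, which tends to zero with $\epsilon$, and taking $\epsilon$ small enough that $4C_0C_{1}r_\epsilon\leq \tfrac12$, I obtain both conclusions at once: for $v\in\mathcal{B}_\mu(r_\epsilon)$ the self-mapping bound $\|\mathcal{A}_\epsilon v\|_{\mu,1,\alpha}\leq r_\epsilon/2+C_0C_{1}r_\epsilon^{2}\leq r_\epsilon$, and for $v_1,v_2\in\mathcal{B}_\mu(r_\epsilon)$ the Lipschitz bound collapses to $\tfrac12\|v_1-v_2\|_{\mu,1,\alpha}$.

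The main obstacle I foresee is the weighted H\"older algebra estimate on $\mathcal{N}_\epsilon$. Because $\overline\omega_\epsilon^\mu$ becomes singular on the neck, the naive product inequality must be combined with the smallness of $v$ in $\mathcal{B}_\mu(r_\epsilon)$, and the nonlocal factor $P_{g_\epsilon}v$ must be controlled in the same weighted class as $v$; this is the step that forces the rate $r_\epsilon\to 0$ to be calibrated against the rate $C_\epsilon\to 0$ coming from the approximate metric construction, and is the only place where the non-local character of the problem leaves a visible imprint on the fixed-point scheme.
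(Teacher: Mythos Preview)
Your approach is exactly the one the paper takes: invoke the uniform bound on $G_\epsilon$ from Proposition~\ref{invL} and Subsection~\ref{subsection:surjectivity}, the error bound $\|\mathcal Q_{g_\epsilon}(1)-c\|_{\mu,0,\alpha}\leq C_\epsilon$ from Lemma~\ref{lemma:error}, and then a quadratic/Lipschitz estimate on $\mathcal N_\epsilon$ to close the contraction. The paper in fact gives far less detail than you do---it simply states that the remaining bound $\|\mathcal N_\epsilon(v_1)-\mathcal N_\epsilon(v_2)\|_{\mu,0,0}\leq o(1)\|v_1-v_2\|_{\mu,1,0}$ is ``standard'' and omits the argument---so your explicit decomposition of $\mathcal N_\epsilon$ and your flagging of the weighted-algebra issue on the neck are welcome elaborations rather than deviations.
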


\begin{proof}
For simplicity, we will show the result for the weighted $\mathcal C^0$ norm, and not $\mathcal C^\alpha$. Modifications in the general case are straightforward.

Recalling the definition of the operator from \eqref{theequation}, we have
$\mathcal Q_{g_\epsilon}(1)=Q_{g_\epsilon}$, so Lemma \ref{lemma:error} yields an estimate for the  term $\mathcal Q_{g_\epsilon} (1)-c$. Then, taking into account the discussion in Section \ref{subsection:surjectivity}, to complete the proof of the Lemma we just need to bound
\begin{equation*}
\|\mathcal N_\epsilon(v_1)-\mathcal N_\epsilon(v_2)\|_{0,0,\mu}\leq o(1) \|v_1-v_2\|_{1,0,\mu}.
\end{equation*}
The proof of this inequality is standard and therefore we omit the argument.
\end{proof}

From the previous lemma, we have constructed a solution $v\in (\overline\omega_\epsilon)^\mu$ such that the metric $g'=(1+v)^{\frac{4}{n-1}}g_\epsilon$ has constant $Q$-curvature. The usual maximum principle for Dirichlet-to-Neumann operators (see \cite{Cabre-Sire:I}, for instance) implies that $1+v$ is a positive function. Moreover, by equation \eqref{equation-extension}, the metric $\bar g'=(1+v)^{\frac{4}{n-1}}\bar g_\epsilon$, has zero scalar curvature. This completes the proof of Theorem \ref{main}.

\qed

\bigskip

\noindent\textbf{Acknowledgements.}  Part of the work was done when W. Ao was visiting the math department of Universidad Aut\'onoma de Madrid in June 2018, she thanks the hospitality of the department.  M.d.M. Gonz\'alez is supported by the Spanish government grants MTM2014-52402-C3-1-P and MTM2017-85757-P. Y.S. would like to thank E. Delay and F. Pacard for useful dicussions. \\


%

\bibliographystyle{plain}

\begin{thebibliography}{10}

\bibitem{ACDFGW}
Weiwei Ao, Hardy Chan, Azahara DelaTorre, Marco~A. Fontelos, Maria del~Mar
  Gonz{\'a}lez, and Juncheng Wei.
\newblock On higher dimensional singularities for the fractional yamabe
  problem: a non-local mazzeo-pacard program.
\newblock {\em Preprint}.

\bibitem{Supercritical}
Weiwei Ao, Hardy Chan, Mar\'ia del~Mar Gonz\'alez, and Juncheng Wei.
\newblock Bound state solutions for the supercritical fractional
  {S}chr\"odinger equation.
\newblock {\em Preprint}.

\bibitem{Existence-weak-solutions}
Weiwei Ao, Hardy Chan, Mar\'ia del~Mar Gonz\'alez, and Juncheng Wei.
\newblock Existence of positive weak solutions for fractional {L}ane-{E}mden
  equations with prescribed singular sets.
\newblock {\em Preprint}.

\bibitem{Gluing-isolated}
Weiwei Ao, Azahara DelaTorre, Mar\'ia del~Mar Gonz\'alez, and Juncheng Wei.
\newblock A gluing approach for the fractional yamabe problem with prescribed
  isolated singularities.
\newblock {\em Preprint}.

\bibitem{Brendle-Chen}
Simon Brendle and Szu-Yu~Sophie Chen.
\newblock An existence theorem for the {Y}amabe problem on manifolds with
  boundary.
\newblock {\em J. Eur. Math. Soc. (JEMS)}, 16(5):991--1016, 2014.

\bibitem{Cabre-Sire:I}
Xavier Cabr\'e and Yannick Sire.
\newblock Nonlinear equations for fractional {L}aplacians, {I}: {R}egularity,
  maximum principles, and {H}amiltonian estimates.
\newblock {\em Ann. Inst. H. Poincar\'e Anal. Non Lin\'eaire}, 31(1):23--53,
  2014.

\bibitem{Caffarelli-Silvestre}
Luis Caffarelli and Luis Silvestre.
\newblock An extension problem related to the fractional {L}aplacian.
\newblock {\em Comm. Partial Differential Equations}, 32(7-9):1245--1260, 2007.

\bibitem{Case-Chang}
Jeffrey~S. Case and Sun-Yung~Alice Chang.
\newblock On fractional {GJMS} operators.
\newblock {\em Comm. Pure Appl. Math.}, 69(6):1017--1061, 2016.

\bibitem{CG}
Sun-Yung~Alice Chang and Mar{\'{\i}}a del~Mar Gonz{\'a}lez.
\newblock Fractional {L}aplacian in conformal geometry.
\newblock {\em Adv. Math.}, 226(2):1410--1432, 2011.

\bibitem{DDGW}
Azahara DelaTorre, Manuel del Pino, Mar\'i a del~Mar Gonz\'alez, and Juncheng
  Wei.
\newblock Delaunay-type singular solutions for the fractional {Y}amabe problem.
\newblock {\em Math. Ann.}, 369(1-2):597--626, 2017.

\bibitem{DelaTorre-Gonzalez}
Azahara DelaTorre and Maria del~Mar Gonz{\'a}lez.
\newblock Isolated singularities for a semilinear equation for the fractional
  laplacian arising in conformal geometry.
\newblock {\em To appear in Rev. Mat. Iber.}

\bibitem{escobar}
Jos{\'e}~F. Escobar.
\newblock Conformal deformation of a {R}iemannian metric to a scalar flat
  metric with constant mean curvature on the boundary.
\newblock {\em Ann. of Math. (2)}, 136(1):1--50, 1992.

\bibitem{Gonzalez:survey}
Mar\'ia del~Mar Gonz\'alez.
\newblock Recent progress on the fractional laplacian in conformal geometry.
\newblock In {\em Recent Developments in Nonlocal Theory. Palatucci, G. \&
  Kuusi, T. (Eds.)}.

\bibitem{GMS}
Maria del~Mar Gonz{\'a}lez, Rafe Mazzeo, and Yannick Sire.
\newblock Singular solutions of fractional order conformal {L}aplacians.
\newblock {\em J. Geom. Anal.}, 22(3):845--863, 2012.

\bibitem{GZ}
C.~Robin Graham and Maciej Zworski.
\newblock Scattering matrix in conformal geometry.
\newblock {\em Invent. Math.}, 152(1):89--118, 2003.

\bibitem{Guillarmou:meromorphic}
Colin Guillarmou.
\newblock Meromorphic properties of the resolvent on asymptotically hyperbolic
  manifolds.
\newblock {\em Duke Math. J.}, 129(1):1--37, 2005.

\bibitem{GG}
Colin Guillarmou and Laurent Guillop{\'e}.
\newblock The determinant of the {D}irichlet-to-{N}eumann map for surfaces with
  boundary.
\newblock {\em Int. Math. Res. Not. IMRN}, (22):Art. ID rnm099, 26, 2007.

\bibitem{joyce}
Dominic Joyce.
\newblock Constant scalar curvature metrics on connected sums.
\newblock {\em Int. J. Math. Math. Sci.}, (7):405--450, 2003.

\bibitem{Li-Wu-Xu}
Congming Li, Zhigang Wu, and Hao Xu.
\newblock Maximum principles and b{\^o}cher type theorems.
\newblock {\em Proceedings of the National Academy of Sciences}, 2018.

\bibitem{Marques:existence-results}
Fernando~C. Marques.
\newblock Existence results for the {Y}amabe problem on manifolds with
  boundary.
\newblock {\em Indiana Univ. Math. J.}, 54(6):1599--1620, 2005.

\bibitem{Mayer-Ndiaye}
Martin Mayer and Cheikh~Birahim Ndiaye.
\newblock Barycenter technique and the {R}iemann mapping problem of
  {C}herrier-{E}scobar.
\newblock {\em J. Differential Geom.}, 107(3):519--560, 2017.

\bibitem{Mazzeo:edge}
Rafe Mazzeo.
\newblock Elliptic theory of differential edge operators. {I}.
\newblock {\em Comm. Partial Differential Equations}, 16(10):1615--1664, 1991.

\bibitem{Mazzeo-Pacard:construction}
Rafe Mazzeo and Frank Pacard.
\newblock A construction of singular solutions for a semilinear elliptic
  equation using asymptotic analysis.
\newblock {\em J. Differential Geom.}, 44(2):331--370, 1996.

\bibitem{Mazzeo-Pacard}
Rafe Mazzeo and Frank Pacard.
\newblock Maskit combinations of {P}oincar\'e-{E}instein metrics.
\newblock {\em Adv. Math.}, 204(2):379--412, 2006.

\bibitem{MPU}
Rafe Mazzeo, Daniel Pollack, and Karen Uhlenbeck.
\newblock Connected sum constructions for constant scalar curvature metrics.
\newblock {\em Topol. Methods Nonlinear Anal.}, 6(2):207--233, 1995.

\bibitem{Mazzeo-Vertman}
Rafe Mazzeo and Boris Vertman.
\newblock Elliptic theory of differential edge operators, {II}: {B}oundary
  value problems.
\newblock {\em Indiana Univ. Math. J.}, 63(6):1911--1955, 2014.

\end{thebibliography}

\end{document}